\newcommand{\leqnomode}{\tagsleft@true}
\newcommand{\reqnomode}{\tagsleft@false}
\newlength\myheight
\newlength\mydepth
\settototalheight\myheight{Xygp}
\newcommand{\ignore}[1]{}
\newcommand{\R}{\mathbb{R}}
\newcommand{\N}{\mathbb{N}}
\newcommand{\qed}{\nobreak \ifvmode \relax \else
      \ifdim\lastskip<1.5em \hskip-\lastskip
      \hskip1.5em plus0em minus0.5em \fi \nobreak
      \vrule height0.75em width0.5em depth0.25em\fi}
\providecommand{\colvec}[1]{{\text{\scriptsize $
\begin{array}{c}
 #1
\end{array}$
}}}
\newcommand{\eqdef}{\overset{\text{def}}{=}} 
\newcommand{\E}[1]{\mathbf{E}\left[#1\right] } 
\newcommand{\norm}[1]{\lVert#1\rVert}
\newcommand{\dotprod}[1]{\left< #1\right>}
\newcommand{\Tr}[1]{\mathbf{Tr}\left( #1\right)}
\providecommand{\Null}[1]{\mathbf{Null}\left( #1\right)}
\providecommand{\Rank}[1]{\mathbf{Rank}\left( #1\right)}
\providecommand{\Range}[1]{\mathbf{Range}\left( #1\right)}
\newtheorem{definition}{Definition}
\newtheorem{theorem}[definition]{Theorem}
\newtheorem{lemma}[definition]{Lemma}
\title{Linearly Convergent Randomized Iterative Methods \\ for Computing the Pseudoinverse} 
\author{Robert M. Gower\thanks{\it Inria - ENS, \tt robert.gower@inria.fr} \and Peter Richt\'{a}rik\thanks{{\it School of Mathematics, University of Edinburgh}. This author would like to acknowledge support from the EPSRC Grant EP/K02325X/1, {\em Accelerated Coordinate Descent Methods for Big Data Optimization} and the EPSRC Fellowship EP/N005538/1, {\em Randomized Algorithms for Extreme Convex Optimization.} }\\\\
  }
\begin{document}
\maketitle


\begin{abstract}
We develop the first stochastic incremental method for calculating the Moore-Penrose pseudoinverse of a real matrix. By leveraging three alternative characterizations of pseudoinverse matrices, we design three  methods for calculating the pseudoinverse: two general purpose methods and one  specialized to symmetric matrices. The two general purpose methods are proven to converge linearly to the pseudoinverse of any given matrix. For calculating the pseudoinverse of full rank matrices we present two additional specialized methods which enjoy a faster convergence rate than the general purpose methods. We also indicate how to develop randomized methods for calculating approximate range space projections, a much needed tool in inexact Newton type methods or quadratic solvers when linear constraints are present. Finally, we present numerical experiments of our general purpose methods for calculating pseudoinverses and show that our methods greatly outperform the Newton-Schulz method on large dimensional matrices. 
\end{abstract}
{\footnotesize {\bf MSC classes:} 	15A09, 15B52, 15A24, 65F10, 65F08, 68W20, 65Y20, 65F20, 68Q25, 68W40, 90C20\\
{\bf ACM class:} 	G.1.3}
\section{Introduction}

Calculating the pseudoinverse matrix is a basic numerical linear algebra tool required throughout scientific computing; for example in neural networks~\cite{Tapson2013}, signal processing~\cite{Robertson_apseudoinverse,Feichtinger1991} and image denoising~\cite{ADLER_ICASSP_2010}. Perhaps the most important application of approximate pseudoinverse matrices is in preconditioning; for instance, within the approximate inverse preconditioning\footnote{A more accurate name for these techniques would be ``approximate \emph{pseudo}inverse preconditioning''. This is because they form a preconditioner by approximately solving $\min_X\norm{AX-I}$, with $A$ not always guaranteed to be nonsingular, leading to the solution being  the pseudoinverse.} techniques~\cite{Grote96,Gould1998,Chow1998,Benzi1999}.

 Currently, the pseudoinverse matrix is calculated using either the singular value decomposition or, when the dimensions of the matrix are large, using a Newton-Schulz type method~\cite{Ben-Israel1965,Ben-Israel1966b,PanSchreiber91}. However, neither of these aforementioned methods were designed with big data problems in mind, and when applied to matrices from machine learning, signal processing and image analysis, these classical methods can fail due exceeding the cache memory or take too much time. 
 
 In this paper we develop new fast  stochastic incremental methods for calculating the pseudoinverse, capable of calculating an approximate pseudoinverse of truly large dimensional matrices. The problem of determining the pseudoinverse from stochastic measurements also serves as a model problem for determining an approximation to a high dimensional object from a few low dimensional measurements.

The new stochastic methods we present are part of a growing class of ``sketch-and-project''  methods~\cite{GowerThesis}, which have successfully been used for solving linear systems~\cite{Gower2015,Gower2015c}, the distributed average consensus problem~\cite{Loizourichtarik2016,Gower2015c} and inverting matrices~\cite{Gower2016}. 

We  also envision that the new methods presented here for calculating pseudoinverse matrices will lead to the development of new quasi-Newton methods,  much like the development of new randomized methods for inverting matrices~\cite{Gower2016} has lead to the development of new stochastic quasi-Newton methods~\cite{GowerGold2016}.

\subsection{The Moore-Penrose Pseudoinverse}

The pseudoinverse of a real rectangular matrix $A\in \R^{m\times n}$ was first defined  as the unique matrix $A^\dagger\in \R^{n\times m}$  that satisfies four particular matrix equations~\cite{Penrose1955,Moore1920}. 
However, for our purposes it will be more convenient to recall a definition using the singular value decomposition (\emph{SVD}). Let $A = U \Sigma V^\top$ be the SVD of $A$, where $U\in \R^{m \times m}$ and $V\in \R^{n \times n}$ are orthogonal matrices and $\Sigma \in \R^{m\times n}$ is a diagonal matrix. The pseudoinverse $A^\dagger \in \R^{n \times m}$ is defined as $A^\dagger = V \Sigma^\dagger U^\top$, where $\Sigma^\dagger$ is the diagonal matrix with $\Sigma^\dagger_{ii} = 1/\Sigma_{ii}$ if $\Sigma_{ii} \neq 0$ and $\Sigma^\dagger_{ii} =0$ otherwise. This immediately gives rise to a method for calculating the pseudoinverse via the SVD decomposition which costs $O(m^2 n)$ floating point operations. When $m$ and $n$ are both large, this can be exacerbating and also unnecessary if one only needs a rough approximation of the pseudoinverse.  Therefore, in this work we take a different approach. 

In particular, it turns out that the pseudoinverse can alternatively be defined as the least-Frobenius-norm solution to any one of the three equations given in Lemma~\ref{lem:pseudo3}.
\begin{lemma} \label{lem:pseudo3} The pseudoinverse matrix $A^{\dagger}$ is the least-Frobenius-norm solution of any of the three equations:
\[(P1)\quad AX A = A, \qquad (P2)\quad A^\top  = XAA^\top, \qquad \text{and} \qquad  (P3)\quad A^\top  = A^\top AX. \] 
\end{lemma}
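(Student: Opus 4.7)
The plan is to reduce all three statements to essentially the same block-diagonal computation by transforming each equation through the SVD. Write $A = U\Sigma V^\top$ with $\mathrm{rank}(A)=r$, and decompose $\Sigma \in \R^{m\times n}$ as
\[
\Sigma = \begin{pmatrix} \Sigma_r & 0 \\ 0 & 0 \end{pmatrix},
\]
where $\Sigma_r \in \R^{r\times r}$ is diagonal with positive entries. Substitute the change of variables $X = V \tilde{X} U^\top$ into each equation. Since $U$ and $V$ are orthogonal, $\|X\|_F = \|\tilde{X}\|_F$, so minimizing the Frobenius norm of $X$ subject to the original equation is equivalent to minimizing the Frobenius norm of $\tilde{X}$ subject to the transformed equation. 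Likewise, $A^\dagger = V\Sigma^\dagger U^\top$ corresponds to $\tilde{X}=\Sigma^\dagger$.

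For (P1), the substitution reduces $AXA=A$ to $\Sigma \tilde{X} \Sigma = \Sigma$. Partitioning $\tilde{X}$ conformably with $\Sigma$, a direct block multiplication shows that this equation forces the top-left $r\times r$ block of $\tilde{X}$ to equal $\Sigma_r^{-1}$, while the three remaining blocks are free. Since
\[
\|\tilde{X}\|_F^2 = \|\Sigma_r^{-1}\|_F^2 + \|\tilde{X}_{12}\|_F^2 + \|\tilde{X}_{21}\|_F^2 + \|\tilde{X}_{22}\|_F^2,
\]
the Frobenius norm is minimized precisely when the free blocks vanish, yielding $\tilde{X}=\Sigma^\dagger$, and therefore $X=A^\dagger$.

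For (P2), the substitution reduces the equation to $\tilde{X}\,\Sigma\Sigma^\top = \Sigma^\top$. The matrix $\Sigma\Sigma^\top$ has only the $\Sigma_r^2$ block non-zero, so block-multiplying constrains the left two column-blocks of $\tilde{X}$ (forcing $\tilde{X}_{11}=\Sigma_r^{-1}$ and $\tilde{X}_{21}=0$) while leaving the right two blocks free. Minimizing the Frobenius norm again kills the free blocks and returns $\tilde{X}=\Sigma^\dagger$. An entirely analogous argument, using $\Sigma^\top\Sigma$ in place of $\Sigma\Sigma^\top$, handles (P3): it forces $\tilde{X}_{11}=\Sigma_r^{-1}$ and $\tilde{X}_{12}=0$, leaves $\tilde{X}_{21},\tilde{X}_{22}$ free, and minimization yields $\Sigma^\dagger$.

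The only subtlety is bookkeeping the three parallel block decompositions and confirming which blocks are pinned versus free; there is no hard step. One could alternatively argue abstractly that each equation is a linear matrix equation whose least-Frobenius-norm solution is characterized by an orthogonality condition, and then verify directly that $A^\dagger$ satisfies both the equation and the orthogonality condition via the standard Penrose identities, but the SVD route above is more self-contained and uniform across the three cases.
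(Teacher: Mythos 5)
Your proof is correct: the change of variables $X=V\tilde{X}U^\top$ is a Frobenius-norm isometry, the block computations correctly identify which blocks of $\tilde{X}$ are pinned to $\Sigma_r^{-1}$ (or to zero) and which are free in each of the three equations, and strict convexity of the norm in the free blocks gives the unique minimizer $\tilde{X}=\Sigma^\dagger$, i.e.\ $X=A^\dagger$. Your route differs from the paper's, however. The paper never proves Lemma~\ref{lem:pseudo3} by a standalone SVD argument; it treats the least-norm characterizations as a consequence of its sketch-and-project machinery: in Theorem~\ref{theo:ATAX} (for (P3)) and Theorem~\ref{theo:SAXAS} (for (P1)) it derives the closed-form solution of the constrained projection via Lagrangian duality and then checks, by setting $S=I$ and $X_0=0$ and invoking the identities of Lemma~\ref{lem:pseudo} (which are cited to Penrose, Moore and Desoer rather than proved), that the resulting minimizer is exactly $A^\dagger$; the (P2) case is handled by transposition in Section~\ref{sec:XAAT}. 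So the paper's verification is the ``abstract'' alternative you sketch in your last paragraph, relying on known Penrose identities and on the duality/projection framework it needs anyway, while your SVD block-decomposition proof is more elementary and self-contained, treats all three equations uniformly, and does not presuppose Lemma~\ref{lem:pseudo} or any duality argument; the trade-off is that the paper's route simultaneously yields the dual formulations and the explicit update formulas that drive the algorithms, which your computation does not provide.
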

In Sections~\ref{sec:ATAX} and~\ref{sec:AXA}
we prove this statement for $(P1)$ and $(P2)$, and $(P3),$ respectively.

  We use  these  three variational characterizations of the pseudoinverse given in Lemma~\ref{lem:pseudo3}  to design three different stochastic iterative methods for calculating the pseudoinverse. Based on  (P2) and (P3), we propose two  methods for calculating the pseudoinverse of any real matrix in Section~\ref{sec:ATAX}. We exploit the symmetry in (P1) to propose a new randomized method for calculating the pseudoinverse of a symmetric matrix in Section~\ref{sec:AXA}.

In the next lemma we collect several basic properties of the pseudoinverse which we shall use often throughout the paper.
\leqnomode 
\begin{lemma} Any matrix  $A\in \R^{m\times n}$ and its pseudoinverse $A^\dagger \in \R^{n\times m}$ satisfy the following identities:\label{lem:pseudo} 
\begin{align} 
(A^{\dagger})^\top &= (A^\top)^\dagger \label{it:consprop0}  \hspace{10cm}\\
A^\top  &= A^{\dagger}AA^\top  \label{it:consprop2} \\
A^\top  &= A^\top AA^{\dagger}  \label{it:consprop3} \\
\Null{A^\dagger} & = \Null{A^\top}  \label{it:consprop4}
\end{align}
\end{lemma}
\reqnomode
Note that in the  identities above the pseudoinverse \emph{behaves} like the inverse would, were it to exist.
Because of~\eqref{it:consprop0}, we will use $A^{\dagger \top}$  to denote $(A^{\dagger})^\top $ or $(A^\top)^\dagger$.  Lemma~\ref{lem:pseudo} is a direct consequence of the definition of the pseudoinverse;  see~\cite{Penrose1955,Moore1920} for a proof based on the classical definition and~\cite{Desoer1963} for a proof based on a definition of the pseudoinverse through projections (all of which are equivalent approaches).

\subsection{Notation} We denote  the  Frobenius inner product and norm by
 \[\dotprod{X, Y}  \eqdef  \Tr{X^\top Y} \qquad \mbox{and} \qquad \norm{X} = \sqrt{\Tr{X^\top X}},\] 
where $X$ and $Y$ are any compatible real matrices and $\Tr{X}$ denotes the trace of $X$. Since the trace is invariant under  cyclic permutations, for matrices $X,Y,Z$ and $W$ of appropriate dimension, we have 
\begin{equation} \label{eq:traceperm}
\dotprod{X,YZW} = \Tr{ X^\top YZW} = \Tr{(WX^\top Y) Z} = \dotprod{Y^\top X W^\top ,Z}. \end{equation}
By $\Null{A}$ and $\Range{A}$ we denote the null space and range space of $A$, respectively.
For a positive semidefinite matrix $G$, let $\lambda_{\min}^+(G)$ denote the smallest nonzero eigenvalue of $G$.

\section{Sketch-and-Project Methods Based on~(P3) and (P2) } \label{sec:ATAX}

In view of property~(P3) of Lemma~\ref{lem:pseudo3},  the pseudoinverse can be characterized as the solution to the  constrained optimization problem 
\begin{equation}\label{eq:ATAX}
A^{\dagger} \eqdef \arg \min \frac{1}{2}\norm{X}^2,\quad  \mbox{subject to} \quad A^\top  =  A^\top AX.
\end{equation}
We shall prove in Theorem~\ref{theo:ATAX} that the above variational characterization has the following equivalent dual formulation
\begin{equation} \label{eq:ATAXdual}
A^{\dagger} = \arg_X \min_{X,\Gamma} \frac{1}{2}\norm{X-A^\dagger }^2,\quad  \mbox{subject to} \quad X  = A^\top A \Gamma.
\end{equation}

The dual formulation~\eqref{eq:ATAXdual} appears to be rather impractical, since using~\eqref{eq:ATAXdual} to calculate  $A^\dagger$ requires projecting the unknown matrix $A^\dagger$ onto a particular affine matrix space. But duality reveals that~\eqref{eq:ATAXdual} can be calculated by solving the primal formulation~\eqref{eq:ATAX}, which does not require knowing the pseudoinverse a priori. The dual formulation reveals that we should not search for $A^\dagger$ within the whole space $\R^{n \times m}$ but rather, the pseudoinverse is contained in the matrix space which forms the constraint in~\eqref{eq:ATAXdual}.

In the next section we build upon the characterization~\eqref{eq:ATAX} to develop a new stochastic method for calculating the pseudoinverse. 
\subsection{The method}

Starting from an iterate $X_k \in \R^{n \times m}$, we calculate the next iterate $X_{k+1}\in \R^{n \times m}$ by drawing a random matrix $S \in \R^{m \times \tau}$ from a fixed distribution $\mathcal{D}$ (we do not pose any restrictions on $\tau$) and projecting $X_k$ onto the sketch of~(P3):
\begin{equation}\label{eq:SATAX}
X_{k+1} \eqdef \arg \min \frac{1}{2}\norm{X-X_k}^2,\quad  \mbox{subject to} \quad S^\top A^\top  = S^\top A^\top AX.
\end{equation}
The dual formulation of~\eqref{eq:SATAX} is given by
\begin{equation} \label{eq:SATAXdual}
X_{k+1} \eqdef \arg_X \min_{X,\Gamma} \frac{1}{2}\norm{X-A^\dagger }^2,\quad  \mbox{subject to} \quad X  = X_k+A^\top A S \Gamma.
\end{equation}
The duality of these two formulations is established in the following theorem, along with an explicit solution to~\eqref{eq:SATAX} that will be used to devise efficient implementations.
\begin{theorem} \label{theo:ATAX}
If $X_k \in \Range{A^\top A}$  then solving~\eqref{eq:ATAX} and~\eqref{eq:SATAX} is equivalent to solving~\eqref{eq:ATAXdual} and~\eqref{eq:SATAXdual}, respectively.
Furthermore,  the explicit solution to~\eqref{eq:SATAX} is given by
\begin{equation} \label{eq:SATAXsol}
\boxed{X_{k+1} =X_k-A^\top A S(S^\top A^\top A A^\top AS)^{\dagger}S^\top A^\top(AX_k-I)}.
\end{equation}
\end{theorem}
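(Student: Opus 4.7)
The plan is to first derive the closed-form expression \eqref{eq:SATAXsol} from the KKT conditions of the primal \eqref{eq:SATAX}, and then establish the duality by rewriting the primal optimality conditions as the characterization of the projection appearing in \eqref{eq:SATAXdual}. I would form the Lagrangian $L(X,\Lambda) = \tfrac{1}{2}\norm{X-X_k}^2 + \dotprod{\Lambda,\, S^\top A^\top(AX-I)}$; the stationarity condition $\nabla_X L = 0$ gives $X_{k+1} = X_k - A^\top A S \Lambda$, and substituting this into the constraint $S^\top A^\top A X_{k+1} = S^\top A^\top$ yields the linear system $M\Lambda = S^\top A^\top(AX_k - I)$ with $M \eqdef S^\top A^\top A A^\top A S$. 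The crucial step is to check that this system is consistent: using identity \eqref{it:consprop3}, $S^\top A^\top = S^\top A^\top A A^\dagger = (S^\top A^\top A) A^\dagger$, so the right-hand side lies in $\Range{S^\top A^\top A} = \Range{M}$. Hence $\Lambda = M^\dagger S^\top A^\top(AX_k - I)$ is a valid solution; any other solution differs by an element of $\Null{M} = \Null{A^\top A S}$, which is annihilated upon left-multiplication by $A^\top A S$, so $X_{k+1}$ is independent of the choice of $\Lambda$ and coincides with the boxed formula.

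To prove equivalence of \eqref{eq:SATAX} and \eqref{eq:SATAXdual}, I would show that the unique $X_{k+1}$ produced above is simultaneously characterized by two orthogonality conditions: (i) $X_{k+1} - X_k \in \Range{A^\top A S}$, which is immediate from $X_{k+1} = X_k - A^\top A S \Lambda$, and (ii) $X_{k+1} - A^\dagger$ is orthogonal to $\Range{A^\top A S}$. To obtain (ii), rewrite the primal feasibility as $S^\top A^\top A(X_{k+1} - A^\dagger) = 0$ using $S^\top A^\top = S^\top A^\top A A^\dagger$ again; this is exactly $(A^\top A S)^\top(X_{k+1} - A^\dagger) = 0$. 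Conditions (i) and (ii) jointly characterize $X_{k+1}$ as the orthogonal projection of $A^\dagger$ onto the affine subspace $X_k + \Range{A^\top A S}$, which is precisely problem \eqref{eq:SATAXdual}. The equivalence of \eqref{eq:ATAX} and \eqref{eq:ATAXdual} is then the special case $S = I$, $X_k = 0$ and requires no new ideas.

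The main obstacle throughout is ensuring that the pseudoinverse-based algebra is rigorous: that the linear system for $\Lambda$ is consistent and that $X_{k+1}$ is well-defined despite the possible non-uniqueness of $\Lambda$. Both points hinge on identity \eqref{it:consprop3}, which plays here the role that $A^{-1}$ would play if $A$ were invertible. Once this technical point is handled, both the duality claim and the explicit formula fall out of the standard KKT analysis of a projection onto an affine subspace.
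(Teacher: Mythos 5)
Your proof is correct, and while the derivation of the explicit formula \eqref{eq:SATAXsol} follows essentially the paper's path, your treatment of the duality claim is a genuinely different and somewhat more elementary route. For the formula, both you and the paper form the same Lagrangian, obtain the stationarity relation $X = X_k - A^\top A S\Lambda$, and must justify replacing the inverse of $M \eqdef S^\top A^\top A A^\top A S$ by its pseudoinverse; you do this via the Gram identities $\Range{M}=\Range{S^\top A^\top A}$ and $\Null{M}=\Null{A^\top A S}$ combined with \eqref{it:consprop3} for consistency of the system in $\Lambda$, whereas the paper routes the same facts through Lemma~\ref{lem:09709s} with $G=AA^\top$, $W=AS$ --- these are interchangeable, and your well-definedness argument (independence of $X_{k+1}$ from the choice of $\Lambda$) is sound. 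The divergence is in establishing the equivalence with \eqref{eq:SATAXdual}: the paper substitutes the stationarity relation back into the Lagrangian, computes the dual objective $-\tfrac{1}{2}\norm{X-X_k-(A^\dagger-X_k)}^2+\tfrac{1}{2}\norm{A^\dagger-X_k}^2$, and appeals to strong duality; you instead verify directly that the computed $X_{k+1}$ lies in the affine set $X_k+\{A^\top A S\Gamma\}$ and that $X_{k+1}-A^\dagger$ is Frobenius-orthogonal to $\Range{A^\top A S}$ (primal feasibility rewritten via $S^\top A^\top = S^\top A^\top A A^\dagger$), which characterizes $X_{k+1}$ as the projection of $A^\dagger$ onto that set, i.e.\ the solution of \eqref{eq:SATAXdual}. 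Your argument avoids computing the dual function and invoking strong duality altogether and delivers exactly the asserted equivalence of solutions; what the paper's computation buys in exchange is the explicit identification of \eqref{eq:SATAXdual} as the Lagrangian dual, which is the interpretation stressed in the surrounding discussion. One small omission relative to the paper: after specializing to $S=I$, $X_k=0$, the paper also checks (using \eqref{eq:SATAXsol} and (P1)) that the common solution of \eqref{eq:ATAX} and \eqref{eq:ATAXdual} is indeed $A^\dagger$; in your framework this follows in one line from feasibility of $A^\dagger$ for \eqref{eq:ATAXdual}, i.e.\ $A^\dagger = A^\top A\Gamma$ for a suitable $\Gamma$, so the projection distance is zero --- worth stating explicitly, but not a gap in the equivalence argument itself.
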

\begin{proof} We will first show, using Lagrangian duality,  that~\eqref{eq:SATAX} and~\eqref{eq:SATAXdual} are equivalent.
The Lagrangian of~\eqref{eq:SATAX} is given by
\begin{eqnarray}
 L(X,\Gamma) &=& \frac{1}{2}\norm{X-X_k}^2 + \dotprod{\Gamma, S^\top A^\top AX-S^\top A^\top } \nonumber \\
 & = &  \frac{1}{2}\norm{X-X_k}^2 + \dotprod{A^\top AS\Gamma, X }
 - \dotprod{\Gamma, S^\top A^\top}. \label{eq:LagrangianSATAX}
\end{eqnarray}
Since~\eqref{eq:SATAX} is a convex optimization problem, strong duality implies that 
\[ \eqref{eq:ATAXdual} \quad = \quad \max_\Gamma \min_X  L(X,\Gamma)  = \min_X \max_\Gamma L(X,\Gamma) =  \eqref{eq:ATAX}. \]
Differentiating~\eqref{eq:LagrangianSATAX} in $X$ and setting to zero gives
\begin{equation}\label{eq:XATASGam}
X = X_k-A^\top A S \Gamma.
\end{equation}
Since $X_k \in \Range{A^\top A}$, using the above we conclude  that $X_{k+1}\in \Range{A^\top A} $.
Left multiplying by $S^\top A^\top A$ and observing the constraint in~\eqref{eq:SATAX} gives
$S^\top A^\top  
= S^\top A^\top AX_k -S^\top A^\top A A^\top AS \Gamma.$
Thus,
\begin{equation}\Range{\Gamma} \subset (S^\top A^\top A A^\top AS)^{\dagger}S^\top A^\top(AX_k-I) + \Null{S^\top A^\top A A^\top AS}. \label{eq:adna9sdnh981}
\end{equation}
From  Lemma~\ref{lem:09709s} with $G = AA^\top$ and $W= AS$ we have that
$ \Null{S^\top A^\top A A^\top AS} = \Null{A^\top AS}.$
Consequently, left multiplying~\eqref{eq:adna9sdnh981} by $A^\top A S$ gives
\begin{eqnarray}
A^\top A S\Gamma & = & A^\top A S(S^\top A^\top A A^\top AS)^{\dagger}S^\top A^\top(AX_k-I). 
\end{eqnarray}
The above combined with~\eqref{eq:XATASGam} gives~\eqref{eq:SATAXsol}.

To derive the dual of~\eqref{eq:SATAX}, first substitute~\eqref{eq:XATASGam} into~\eqref{eq:LagrangianSATAX} 
\begin{eqnarray}
 L(X,\Gamma) &= &  \frac{1}{2}\norm{X-X_k}^2 - \dotprod{X-X_k, X }
 - \dotprod{\Gamma, S^\top A^\top A A^\dagger} \nonumber\\
 & =&-\frac{1}{2}\norm{X-X_k}^2  - \dotprod{X-X_k, X_k }
  - \dotprod{A^\top AS\Gamma, A^\dagger} \nonumber \\
  & = & -\frac{1}{2}\norm{X-X_k}^2 
 + \dotprod{X-X_k,  A^\dagger-X_k}  \pm \frac{1}{2}\norm{A^\dagger-X_k}^2 \nonumber \\
 & =& -\frac{1}{2}\norm{X-X_k - (A^\dagger -X_k) }^2 +\frac{1}{2}\norm{A^\dagger-X_k}^2.
\end{eqnarray}
Calculating the argument that maximizes the above, subject to the constraint~\eqref{eq:XATASGam}, is equivalent to solving~\eqref{eq:SATAXdual}. Thus~\eqref{eq:SATAX} and~\eqref{eq:SATAXdual} are dual to one another and consequently equivalent.

Finally,  to see that~\eqref{eq:ATAXdual} is the dual of~\eqref{eq:ATAX}, note that by substituting $X_k =0$ and $S =I$ into~\eqref{eq:SATAX} and~\eqref{eq:SATAXdual} gives
~\eqref{eq:ATAX} and~\eqref{eq:ATAXdual}, respectively. Furthermore, when $S =I$ in ~\eqref{eq:SATAXsol} we have that
\begin{eqnarray*}
 X_{k+1} &\overset{\eqref{eq:SATAXsol}+\text{Lemma}~\ref{lem:pseudo} }{=}&X_k-A^\top A  (A^\top A A^\top A)^{\dagger}A^\top A(X_k-A^\dagger)\\
 & \overset{P1}{=}&  X_k -(X_k-A^\dagger) = A^\dagger,
\end{eqnarray*}
where in the last equality we used that $X_k -A^{\dagger} \in \Range{A^\top A}$.
Consequently the pseudoinverse is indeed the solution to~\eqref{eq:ATAX} and~\eqref{eq:ATAXdual}.
\hfill \qed
%
%
%
\end{proof}
   
The bottleneck  in computing~\eqref{eq:SATAXsol} is performing the matrix-matrix product $S^\top A$, which costs $O(\tau m n)$ arithmetic operations. Since we allow $\tau$ to be any positive integer, even $\tau =1$, the iterations~\eqref{eq:SATAXsol} can be very cheap to compute. Furthermore, method~\eqref{eq:SATAX} converges linearly (in $L2$) under very weak assumptions on the distribution $\cal D$, as we show in the next section.

\subsection{Convergence}
\label{subsec:ATAXconv}
Since the iterates~\eqref{eq:SATAXsol} are defined by a projection process, as we shall see,  proving convergence is rather straightforward.
Indeed, we will now prove that the iterates~\eqref{eq:SATAXsol} converge in $L2$ to the pseudoinverse; that is, the expected norm of $X_{k}-A^{\dagger}$ converges to zero. 
Furthermore, we have a precise expression for the rate at which the iterates converge.

The proofs of convergence of all our methods follow the same machinery.
We first start by proving an invariance property of the iterates; namely, that all the iterates reside in a particular affine matrix subspace. We then show that $X_k - A^{\dagger}$ converges to zero within the said matrix subspace. 

\begin{lemma}\label{lem:invariant} 
If  $
\;\Range{X_0} \subset \Range{A^\top A},$ then the iterates~\eqref{eq:SATAXsol} are such that \\$\Range{X_k -A^{\dagger}}\subset\Range{A^\top A}$ for all $k.$
\end{lemma}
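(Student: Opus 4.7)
The plan is to prove by induction the stronger invariant $\Range{X_k}\subset\Range{A^\top A}$ for every $k\geq 0$, and then observe separately the $k$-independent inclusion $\Range{A^\dagger}\subset\Range{A^\top A}$; the claim will follow from the containment $\Range{X_k - A^\dagger}\subset\Range{X_k}+\Range{A^\dagger}\subset\Range{A^\top A}$.

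The base case is exactly the standing hypothesis $\Range{X_0}\subset\Range{A^\top A}$. For the inductive step, I read off from~\eqref{eq:SATAXsol} that
\[X_{k+1} \;=\; X_k + A^\top A\,M_k, \qquad M_k \eqdef -S(S^\top A^\top A A^\top AS)^\dagger S^\top A^\top(AX_k-I).\]
Whatever the matrix $M_k$ is, every column of $A^\top A\,M_k$ lies in $\Range{A^\top A}$; by the inductive hypothesis so does every column of $X_k$; and since $\Range{A^\top A}$ is a linear subspace, the columns of $X_{k+1}$ lie in it as well.

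For the auxiliary fact $\Range{A^\dagger}\subset\Range{A^\top A}$, I appeal to the SVD $A = U\Sigma V^\top$ that the paper already uses to define the pseudoinverse. Since $A^\dagger = V\Sigma^\dagger U^\top$ and $A^\top A = V\Sigma^\top\Sigma V^\top$, the column spaces of both matrices coincide with the span of those columns of $V$ indexed by the nonzero singular values of $A$; in particular $\Range{A^\dagger}=\Range{A^\top A}$. Combining this with the induction gives the claim.

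I expect no real obstacle here: the proof is essentially bookkeeping once the update is rewritten in the form $X_{k+1} = X_k + A^\top A\,M_k$, and the only nontrivial step, $\Range{A^\dagger}\subset\Range{A^\top A}$, is immediate from the SVD definition of $A^\dagger$. If one preferred a purely algebraic justification, one could alternatively use identities~\eqref{it:consprop0}--\eqref{it:consprop1} of Lemma~\ref{lem:pseudo} to write $A^\dagger = (A^\top A)^\dagger A^\top$ and then invoke $\Range{(A^\top A)^\dagger}=\Range{A^\top A}$ (valid because $A^\top A$ is symmetric, using~\eqref{it:consprop4} for $A^\top A$).
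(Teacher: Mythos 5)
Your proof is correct and is essentially the paper's argument: an induction establishing $\Range{X_k}\subset\Range{A^\top A}$, combined with the observation $\Range{A^\dagger}\subset\Range{A^\top A}$, yields the claim. The only cosmetic difference is that you justify the inductive step directly from the explicit update~\eqref{eq:SATAXsol} and the inclusion for $A^\dagger$ from the SVD, whereas the paper reads both off the dual constraints in~\eqref{eq:SATAXdual} and~\eqref{eq:ATAXdual}; both justifications are valid.
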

\begin{proof}
Using induction and the constraint in~\eqref{eq:SATAXdual} we have that $\Range{X_k} \subset \Range{A^\top A}$ for all $k.$ The result now follows from $\Range{A^\dagger} \subset \Range{A^\top A}$ as can be seen in~\eqref{eq:ATAXdual}. \hfill \qed
 \end{proof}

\begin{theorem}
Let $X_0 \in \R^{n\times m}$ with  $\Range{X_0}\subset \Range{A^\top A}$ and let $H_S \eqdef  S(S^\top A^\top A A^\top AS)^{\dagger}S^\top.$  The expected iterates~\eqref{eq:SATAXsol} evolve according to
\begin{equation} \label{eq:SATAXExp}
\E{X_{k+1}-A^{\dagger}} = \E{I-A^\top A H_S A^\top A}\E{X_k-A^{\dagger}}.
\end{equation}
Furthermore, if $\E{H_S}$ is finite and positive definite then 
\begin{equation}\label{eq:SATAXExpnorm}
\E{\norm{X_{k}-A^{\dagger}}^2} \leq \rho^k \,\norm{X_0-A^{\dagger}}^2,
\end{equation}
where 
\begin{equation} \label{eq:rhoSATAX}
\rho = 1- \lambda_{\min}^+\left(A^\top A \E{H_S} A^\top A\right).
\end{equation}
\end{theorem}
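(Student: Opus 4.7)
The plan is to unpack the explicit update rule from Theorem~\ref{theo:ATAX}, identify the recursion as a (randomized) matrix projection, and then exploit the invariance established in Lemma~\ref{lem:invariant} to squeeze out the $\lambda_{\min}^+$ rate.

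First I would rewrite the explicit formula~\eqref{eq:SATAXsol} using the notation $H_S = S(S^\top A^\top A A^\top A S)^{\dagger} S^\top$ so that
\[
X_{k+1} = X_k - A^\top A\, H_S\, A^\top(A X_k - I).
\]
Using identity~\eqref{it:consprop3}, namely $A^\top = A^\top A A^\dagger$, one has $A^\top A H_S A^\top = A^\top A H_S A^\top A A^\dagger$, so subtracting $A^\dagger$ from both sides gives
\[
X_{k+1} - A^\dagger \;=\; \bigl(I - A^\top A\, H_S\, A^\top A\bigr)(X_k - A^\dagger).
\]
Since the sketch $S$ at iteration $k+1$ is drawn independently of $X_k$, taking conditional expectation and then using the tower property immediately yields~\eqref{eq:SATAXExp}.

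For the $L^2$ bound, set $M_S \eqdef A^\top A\, H_S\, A^\top A$. The key structural observation is that $M_S$ is a symmetric, idempotent matrix. Symmetry is clear because $A^\top A$ is symmetric and the pseudoinverse of the symmetric matrix $S^\top (A^\top A)^2 S$ is symmetric. For idempotency, writing $B = A^\top A$ and using the identity $C^\dagger C C^\dagger = C^\dagger$ with $C = S^\top B^2 S$,
\[
M_S^2 = B S (S^\top B^2 S)^{\dagger} (S^\top B^2 S) (S^\top B^2 S)^{\dagger} S^\top B = M_S.
\]
Consequently $(I - M_S)^\top(I - M_S) = I - M_S$, so expanding the squared Frobenius norm and taking conditional expectation on $X_k$ gives
\[
\E{\norm{X_{k+1}-A^{\dagger}}^2 \mid X_k} = \norm{X_k - A^\dagger}^2 - \Tr{(X_k - A^\dagger)^\top \E{M_S}(X_k - A^\dagger)}.
\]

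The final, and I expect the most delicate, step is to lower-bound the trace term by $\lambda_{\min}^+(\E{M_S})\,\norm{X_k-A^\dagger}^2$. This is where Lemma~\ref{lem:invariant} does the heavy lifting: every column of $X_k - A^\dagger$ lies in $\Range{A^\top A}$. Because $A^\top A$ is symmetric, $\Range{A^\top A} \cap \Null{A^\top A} = \{0\}$, and since $\E{H_S}$ is positive definite by assumption, the map $y \mapsto A^\top A \E{H_S} A^\top A y$ is strictly positive on $\Range{A^\top A}$. Hence its Rayleigh quotient over this subspace is bounded below by $\lambda_{\min}^+(A^\top A \E{H_S} A^\top A)$. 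Applying this column-by-column to $X_k - A^\dagger$ yields
\[
\E{\norm{X_{k+1}-A^{\dagger}}^2 \mid X_k} \leq \rho\,\norm{X_k - A^\dagger}^2,
\]
with $\rho$ as in~\eqref{eq:rhoSATAX}. Taking total expectation and iterating $k$ times produces~\eqref{eq:SATAXExpnorm}. The main obstacle is the restriction to the invariant subspace $\Range{A^\top A}$: without Lemma~\ref{lem:invariant} one could only get $\lambda_{\min}(\E{M_S}) = 0$, yielding a trivial bound, so the invariance argument is essential for a nontrivial rate.
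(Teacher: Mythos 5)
Your proposal is correct and follows essentially the same route as the paper: subtract $A^\dagger$ using $A^\top = A^\top A A^\dagger$ to get the recursion $X_{k+1}-A^\dagger = (I - A^\top A H_S A^\top A)(X_k-A^\dagger)$, use that $A^\top A H_S A^\top A$ is a symmetric projector to reduce to the quadratic-form term, and invoke the invariance $\Range{X_k - A^\dagger}\subset\Range{A^\top A}$ to bound it by $\lambda_{\min}^+$. The only cosmetic difference is that you verify idempotency and the restricted Rayleigh-quotient bound directly (column by column), where the paper cites a footnote and delegates the latter to Lemma~\ref{lem:WGWtight} with $W=A^\top A$, $G=\E{H_S}$.
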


\begin{proof}
Let $R_k \eqdef X_{k}-A^{\dagger}$ and 
$Z \eqdef A^\top A S (S^\top (A^\top A)^2S)^{\dagger}S^\top A^\top A = A^\top A H_S A^\top A.$ Subtracting $A^{\dagger}$ from both sides of~\eqref{eq:SATAXsol} we have
\begin{equation} \label{eq:SATAfixedp}
 R_{k+1}= (I-Z)R_k.
 \end{equation}
Taking expectation conditioned on $R_k$ gives
$\E{R_{k+1} \, | \, R_k}= (I-\E{Z})R_k.$
Taking expectation again gives~\eqref{eq:SATAXExp}.   Using the properties of pseudoinverse, it is easy to show that $Z$ is a projection matrix and thus $I-Z$ is also a projection matrix\footnote{See Lemma~2.2 in~\cite{Gower2015} for an analogous proof}.
Taking norm squared and then expectation conditioned on $R_k$ in~\eqref{eq:SATAfixedp} gives
\begin{eqnarray}
\E{\norm{R_{k+1}}^2 \, | \, R_k } &=& \E{\dotprod{(I-Z)R_k, (I-Z)R_k}\, | \, R_k } \nonumber \\
& \overset{(I-Z) \text{ is a proj.}}{=}& \E{\dotprod{(I-Z)R_k,R_k}\, | \, R_k } \nonumber\\
& = & \norm{R_k}^2 - \dotprod{\E{Z}R_k,R_k}. \label{eq:SATAtheo1}
\end{eqnarray}
From Lemma~\ref{lem:invariant} we have that there exists $W_k$ such that $R_k = A^\top A W_k.$  Therefore,
\begin{eqnarray} 
\dotprod{\E{Z}R_k,R_k} & \overset{\text{Lemma~\ref{lem:invariant}}}{=} & \dotprod{A^{\top}A\E{Z}A^\top A W_k, W_k}  \nonumber \\
& =&  \dotprod{(A^{\top}A)( A^\top A) H_S (A^\top A)(A^\top A) W_k, W_k} \nonumber\\
& \overset{\text{Lemma~\ref{lem:WGWtight} } }{\geq} & 
\lambda_{\min}^+\left(  A^\top A H_S A^\top A\right) \dotprod{(A^\top A) W_k,(A^{\top}A) W_k} \nonumber \\
& =& \lambda_{\min}^+\left(  A^\top A H_S A^\top A\right)  \norm{R_k}^2 = (1-\rho) \, \norm{R_k}^2. \label{eq:SATAtheo2}
\end{eqnarray} 
Taking expectation in~\eqref{eq:SATAtheo1} we have
\[\E{\norm{R_{k+1}}^2} = \E{\norm{R_k}^2} - \E{\dotprod{\E{Z}R_k,R_k} } \overset{\eqref{eq:SATAtheo2} }{\leq} \rho \,\E{\norm{R_k}^2}.  \]
It remains now to unroll the above recurrence to arrive at~\eqref{eq:SATAXExpnorm}. \hfill \qed
\end{proof}
   
With a precise expression for the convergence rate~\eqref{eq:rhoSATAX} opens up the possibility of tuning the distribution of $S$ so that the resulting has a faster convergence. Next we give an instantiation of the method~\eqref{eq:SATAXsol} and indicate how one can choose the distribution of $S$ to accelerate the method.  We refer to methods based on~\eqref{eq:SATAXsol} as the \texttt{SATAX} methods, inspired on the constraint in~\eqref{eq:SATAX} whose right hand side almost spells out \texttt{SATAX}. Later in Section~\ref{sec:numerics} we perform experiments on  variants of the \texttt{SATAX} method.
\subsection{Discrete examples}
\label{subsec:SATAXexe}

Though our framework and Theorem~\ref{theo:ATAX} allows for $\mathcal{D}$ to be a continuous distribution, for illustration purposes here we focus our attention on  developing examples where $\mathcal{D}$ is a discrete distribution. 

For any discrete distribution $\mathcal{D}$ the random matrix $S \sim \mathcal{D}$ will have a finite number of possible outcomes. Fix $r$ as the number of outcomes and  let $\tau \in \N$ and  $S= S_i \in \R^{m \times \tau}$ with probability  $p_i>0$ for $i=1,\ldots, r$. Let  $\mathbb{S} \eqdef [S_1, \ldots, S_r] \in \R^{m\times r\tau}$. 
If 
\[p_i = \frac{\Tr{S_i^\top (A^\top A)^2S_i}}{\Tr{\mathbb{S}^\top  (A^\top A)^2 \mathbb{S}}},\]
then by Lemma~\ref{lem:convprob} with $G = A^\top A$ as proven in the Appendix, the rate of convergence in Theorem~\ref{theo:ATAX} is given by
\begin{equation} \label{eq:rhoATAXexe}
\rho = 1- \lambda_{\min}^+(\E{Z}) \leq 1- \frac{\lambda_{\min}^+ (\mathbb{S}^\top  (A^\top A)^2 \mathbb{S})}{\Tr{\mathbb{S}^\top  (A^\top A)^2 \mathbb{S}}}. 
\end{equation}
The number $\left.\lambda_{\min}^+ (\mathbb{S}^\top  (A^\top A)^2 \mathbb{S})\right/\Tr{\mathbb{S}^\top  (A^\top A)^2 \mathbb{S}}$ is known as the scaled condition number of $\mathbb{S}^\top  (A^\top A)^2 \mathbb{S}$ and it is the same condition number on which the rate of convergence of the randomized Kaczmarz method depends~\cite{Strohmer2009}. 
This rate~\eqref{eq:rhoATAXexe} suggests that we should choose $\mathbb{S}$ so that $\mathbb{S}^\top (A^\top A)^2 \mathbb{S}$ has a concentrated spectrum and consequently, the scaled condition number is minimized. Ideally, we would want 
$\mathbb{S} = (A^\top A)^{\dagger},$ but this is not possible in practice, though it does inspire the following heuristic choice. If we choose $r$ so that $r\tau =n$, then we can set
 $\mathbb{S} = X_kX_k^\top,$ and consequently $X_k \rightarrow A^\dagger$ we have that  $\mathbb{S} \rightarrow A^\dagger A^{\dagger \top} = (A^\top A)^{\dagger}.$ Though through experiments  we have identified that choosing the sketch matrix so that $\mathbb{S} = X_k$ resulted in the best performance. This observation, together with other empirical observations, has lead us to suggest two alternative sketching strategies: 
\paragraph{Uniform $\tau$--batch sampling:} We say $S$ is a uniform $\tau$--batch sampling if $\mathbf{P}(S = I_{:C}) = \left.1 \right/ \binom{n}{|C|}$ where $C \in \{1,\ldots, n\}$ is a random subset with $|C| = \tau$ chosen uniformly at random and $I_{:C}$ denotes the column concatenation of the columns of the identity matrix indexed by $C.$
\paragraph{Adaptive sketching:} Fix the iteration count $k$  and consider the current iterate $X_{k}$. We say that $S$ is an adaptive sketching if $\mathbf{P}(S = X_kI_{:C})$ where  $I_{:C}$ is a uniform $\tau$--batch sampling.

When using a uniform $\tau$--batch sampling together with the \texttt{SATAX} method, we refer to the resulting method as the \texttt{SATAX\_uni}. We use \texttt{SATAX\_ada} when referring to the method that uses the adaptive sketching. We benchmark both these methods later in Section~\ref{sec:numerics}.

\subsection{A sketch-and-project method based on~(P2) } \label{sec:XAAT}
Yet another characterization of the pseudoinverse  is given by the solution to the  constrained optimization problem based on property (P2):
\begin{equation}\label{eq:XAAT}
A^{\dagger} \eqdef \arg \min \frac{1}{2}\norm{X}^2,\quad  \mbox{subject to} \quad A^\top  =  XAA^\top.
\end{equation}
which has the following equivalent dual formulation
\begin{equation} \label{eq:XAATdual}
A^{\dagger} \eqdef \arg_X \min_{X,\Gamma} \frac{1}{2}\norm{X-A^\dagger }^2,\quad  \mbox{subject to} \quad
X  = \Gamma^\top A A^\top.
\end{equation}
%
Transposing the constraint in~\eqref{eq:XAAT} gives 
$A = AA^\top X^\top .$
Since the Frobenius norm is invariant to transposing the argument, we have that by setting $Y = X^\top$ in~\eqref{eq:XAAT} we get
\begin{equation}\label{eq:XAATT}
A^{\dagger\top} \eqdef \arg \min \frac{1}{2}\norm{Y}^2,\quad  \mbox{subject to} \quad A = AA^\top Y.
\end{equation}
It is now clear to see that~\eqref{eq:XAATT} is equivalent to~\eqref{eq:ATAX} where each occurrence of $A$ has been swapped for $A^\top.$ Because of this simple mapping from~\eqref{eq:XAATT} to~\eqref{eq:ATAX} we refrain from developing methods based on~\eqref{eq:XAATT} (even though these methods are different). 

\section{A Sketch-and-Project Method Based on~(P1) } \label{sec:AXA}
Now we turn out attention to designing a method based on~(P1). In contrast  with the development in the previous section, here we make explicit use of the symmetry present in~(P1). In particular, we introduce a novel sketching technique which we call  \emph{symmetric sketch}.  As we shall see, if $A$ is symmetric, our  method \eqref{eq:SAXASsol} maintains the symmetry of iterates if started from a symmetric matrix $X_0$.  Throughout this section we assume that $A \in \R^{n\times n}$ is a symmetric matrix.

The final variational characterization of the pseudoinverse from Lemma~\ref{lem:pseudo3}, based on (P1), is
\begin{equation}\label{eq:AXA}
A^{\dagger} = \arg \min \frac{1}{2}\norm{X}^2,\quad  \mbox{subject to} \quad AXA = A.
\end{equation}
As before, we have the following equivalent dual formulation
  \begin{equation} \label{eq:AXAdual}
A^{\dagger} = \arg_X   \min_{\Gamma, X} \frac{1}{2} \norm{ A^{\dagger}-X }^2 \quad \mbox{subject to} \quad X = A\Gamma A.
  \end{equation}
 
 In Section~\ref{sec:mehod} we describe our method.  In Theorem~\ref{theo:SAXAS} we prove that these two formulations are equivalent and also show that the iterates of our method are symmetric. This is in contrast with techniques such as the block BFGS update and other methods designed for calculating the inverse of a matrix in~\cite{Gower2016}, where symmetry has to be imposed on the iterates through an explicit constraint.  
  
  Calculating approximations of the pseudoinverse of a symmetric matrix is particularly relevant when designing variable metric methods in optimization, where one wishes to maintain an approximate of the (pseudo)inverse of the Hessian matrix. In contrast to the symmetric methods for calculating the inverse presented in~\cite{Gower2016}, which can be readily interpreted as extensions of known quasi-Newton methods, the method presented in this section appears not to be related to any Broyden quasi-Newton method~\cite{Broyden1967}, nor the SR1 update. This naturally leads to  the question: how would a quasi-Newton method based on~\eqref{eq:SAXASsol} fair? We leave this question  to future research.

\subsection{The method}\label{sec:mehod}

Similarly to the methods developed in Section~\ref{sec:ATAX}, we define an iterative method by projecting onto a sketch of~\eqref{eq:AXA}. In this case, however, we use the {\em symmetric sketch}.  Specifically, we calculate the next iterate $X_{k+1}$ via
\begin{equation}\label{eq:SAXAS}
X_{k+1} \eqdef \arg \min \frac{1}{2}\norm{X-X_k}^2,\quad  \mbox{subject to} \quad S^\top AXAS = S^\top AS,
\end{equation}
where  $S \in \R^{m \times \tau}$ is drawn from  $\mathcal{D}$. The dual formulation is given by
  \begin{equation}\label{eq:SAXASdual}
 X_{k+1} = \arg_X\min_{\Gamma, X} \frac{1}{2} \norm{ X - A^{\dagger} }^2 \quad \mbox{subject to} \quad X = X_k+ A S\Gamma S^\top A.
  \end{equation}
This symmetric sketch makes its debut in this work, since it has not been used in any of the previous works developing sketch-and-project sketching methods~\cite{Gower2015,Gower2015c,Gower2016}. 
  
\begin{theorem} \label{theo:SAXAS}
Solving~\eqref{eq:AXA} and~\eqref{eq:SAXAS} is equivalent to solving~\eqref{eq:AXAdual} and~\eqref{eq:SAXASdual}, respectively.
Furthermore, the explicit solution to~\eqref{eq:SAXAS} is
\begin{equation}\label{eq:SAXASsol}
\boxed{X_{k+1} = X_k +   A S (S^\top A^2 S)^{\dagger} S^\top ( A - AX_kA) S (S^\top A^2 S)^\dagger S^\top A.}
\end{equation}
\end{theorem}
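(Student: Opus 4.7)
The plan is to mirror the Lagrangian-duality argument used for Theorem~\ref{theo:ATAX}. First, I would form the Lagrangian of~\eqref{eq:SAXAS},
\[L(X,\Gamma) = \frac{1}{2}\norm{X - X_k}^2 + \dotprod{\Gamma,\, S^\top A X A S - S^\top A S},\]
and apply the cyclic identity~\eqref{eq:traceperm} to rewrite the coupling term as $\dotprod{A S \Gamma S^\top A,\, X}$. Setting $\partial_X L = 0$ yields the stationarity relation $X = X_k - A S \Gamma S^\top A$; absorbing the sign into $\Gamma$, this is exactly the constraint in~\eqref{eq:SAXASdual}. Since~\eqref{eq:SAXAS} is a convex quadratic with linear equality constraints, strong duality establishes equivalence of the primal and dual formulations, and specializing to $X_k = 0$, $S = I$ transfers the equivalence to~\eqref{eq:AXA} and~\eqref{eq:AXAdual}.

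For the closed form~\eqref{eq:SAXASsol}, I would substitute the stationarity relation back into the primal constraint $S^\top A X A S = S^\top A S$ to obtain the two-sided equation
\[V \Gamma V = S^\top (A - A X_k A) S, \qquad V \eqdef S^\top A^2 S.\]
The natural candidate $\Gamma_\star = V^\dagger S^\top (A - A X_k A) S\, V^\dagger$ solves this provided the right-hand side lies in $\Range{V}$ on both sides; this follows from the factorization $A - A X_k A = A(I - X_k A) = (I - A X_k)A$ together with $\Range{V} = \Range{(AS)^\top}$, so that $V V^\dagger W = W V^\dagger V = W$ and hence $V \Gamma_\star V = W$. Plugging $\Gamma_\star$ into $X = X_k + A S \Gamma S^\top A$ then yields the boxed formula.

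The step I expect to require the most care is the nonuniqueness of $\Gamma$: any $\Gamma_\star + D$ with $V D V = 0$ also solves the two-sided equation. What actually enters the update, however, is $A S \Gamma S^\top A$. Because $V = (AS)^\top (AS)$, so that $V$ and $AS$ share a null space, a short SVD argument on $AS$ shows that $V D V = 0$ forces $A S D S^\top A = 0$, making $X_{k+1}$ well-defined. To finish the dual derivation I would plug the stationarity relation back into $L$ and use~(P1) in the form $\dotprod{\Gamma, S^\top A S} = \dotprod{\Gamma, S^\top A A^\dagger A S} = \dotprod{A S \Gamma S^\top A, A^\dagger}$ to complete the square, bringing $L$ into $-\frac{1}{2}\norm{X - A^\dagger}^2 + \frac{1}{2}\norm{X_k - A^\dagger}^2$, which matches~\eqref{eq:SAXASdual}. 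A final consistency check at $X_k = 0$, $S = I$ reduces the boxed formula to $A(A^2)^\dagger A (A^2)^\dagger A$, which equals $A^\dagger$ by~(P1) and Lemma~\ref{lem:pseudo}, confirming that $A^\dagger$ solves~\eqref{eq:AXA} and~\eqref{eq:AXAdual}.
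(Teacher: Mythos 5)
Your proposal is correct and follows essentially the same route as the paper's proof: Lagrangian duality with the stationarity relation $X=X_k+AS\Gamma S^\top A$, substitution into the constraint to get the two-sided equation $(S^\top A^2S)\Gamma(S^\top A^2S)=S^\top(A-AX_kA)S$, the pseudoinverse particular solution with the observation that the null-space ambiguity in $\Gamma$ does not affect $AS\Gamma S^\top A$, and the completion of the square via (P1) to obtain the dual, followed by the $X_k=0$, $S=I$ specialization. The only (immaterial) difference is that the paper resolves the two-sided equation by splitting it into two linear systems and invoking Lemma~\ref{lem:09709s}, whereas you verify the candidate $\Gamma_\star=V^\dagger W V^\dagger$ directly through range inclusions and an SVD argument on $AS$.
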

\begin{proof}
Let 
\begin{equation} \label{eq:EandB}
E \eqdef X-X_k \quad \mbox{and}  \quad B \eqdef S^\top(A-AX_kA)S.
\end{equation}
Using the above renaming we have that~\eqref{eq:SAXAS} is equivalent to solving
\begin{equation}\label{eq:SAXASE}
 \min \frac{1}{2}\norm{E}^2,\quad  \mbox{subject to} \quad S^\top AEAS = B.
\end{equation}
The Lagrangian of~\eqref{eq:SAXASE} is given by
\begin{eqnarray}
 L(E,\Gamma) &=& \frac{1}{2}\norm{E}^2 + \dotprod{\Gamma, S^\top AEAS-B} \nonumber\\
 & \overset{\eqref{eq:traceperm} }{=} & \frac{1}{2}\norm{E}^2 + \dotprod{A S\Gamma S^\top A,  E} - \dotprod{\Gamma,B}.\label{eq:Lagrangianeq:SAXASE}
\end{eqnarray}
Differentiating in $E$ and setting the derivative to zero gives
\begin{equation}\label{eq:LagSAXASEsoltemp} 
E = -A S\Gamma S^\top A.\end{equation}
Left and right multiplying by $S^\top A$ and $AS$, respectively, and using the constraint in~\eqref{eq:SAXASE} gives
\begin{equation}\label{eq:BSAAS}  B = -(S^\top A^2 S)\Gamma S^\top A^2 S.
\end{equation}
The equation~\eqref{eq:BSAAS} is equivalent to solving in $\Gamma$ the following system
\begin{eqnarray}
(S^\top A^2 S)Y & = & -B \label{eq:BSAAS2lin1}\\
(S^\top A^2 S) \Gamma^\top & = & Y^\top. \label{eq:BSAAS2lin2}
\end{eqnarray}
The solution to~\eqref{eq:BSAAS2lin1} is given by any $Y$ such that
\begin{eqnarray}
\Range{Y} &\subset & -(S^\top A^2 S)^\dagger B + \Null{S^\top A^2 S} \nonumber \\
& \overset{\text{Lemma~\ref{lem:09709s} }}{ \subset} & -(S^\top A^2 S)^\dagger B + \Null{A S}, \label{eq:YSAA^TSsol}
\end{eqnarray}
where we applied Lemma~\ref{lem:09709s} with $G = I$ and $W = A S.$
The solution to~\eqref{eq:BSAAS2lin2} is given by any $\Gamma$ that satisfies
\begin{eqnarray}
\Range{\Gamma^\top} & \subset &  (S^\top A^2S)^\dagger Y^\top + \Null{S^\top A^2 S} \nonumber \\
& \overset{\text{Lemma~\ref{lem:09709s} } }{ \subset} & 
(S^\top A^2 S)^\dagger Y^\top + \Null{AS}.
\end{eqnarray}
Transposing the above, substituting~\eqref{eq:YSAA^TSsol}, left and right multiplying by $A S$ and $S^\top A$ respectively gives
\begin{eqnarray}
\Range{A S\Gamma S^\top A} &\subset &   A S\left(\Null{A S}-(S^\top A^2 S)^\dagger B\right)(S^\top A^2 S)^\dagger  S^\top A \\
& & + A S\Null{AS}^\top  S^\top A \nonumber \\
&= & 
-A S(S^\top A^2 S)^\dagger B (S^\top A^2 S)^\dagger  S^\top A,
\label{eq:gammasolSAXASE}
\end{eqnarray}
where in the last step we used the fact that
$ (A S\Null{AS}^\top  S^\top A)^\top = AS\Null{AS} S^\top A = 0.$
 Inserting~\eqref{eq:gammasolSAXASE} into~\eqref{eq:LagSAXASEsoltemp} gives
$E =  A S (S^\top A^2 S)^{\dagger} B (S^\top A^2 S)^\dagger S^\top A.$
Substituting in the definition of $E$ and $B$ we have~\eqref{eq:SAXASsol}.

For the dual problem, using~\eqref{eq:LagSAXASEsoltemp} and substituting~\eqref{eq:BAXA} into~\eqref{eq:Lagrangianeq:SAXASE}
 gives
\begin{eqnarray*}
  L(E,\Gamma) & = & \frac{1}{2}\norm{E}^2 - \norm{E}^2 - \dotprod{\Gamma, S^\top A(A^{\dagger}-X_k) AS} \\
 &= & -\frac{1}{2}\norm{E}^2  - \dotprod{A S\Gamma S^\top A, X_k-A^\dagger} \\
 & \overset{\eqref{eq:LagSAXASEsoltemp} }{=} & -\frac{1}{2}\norm{E}^2  + \dotprod{E, X_k-A^\dagger}  \pm \frac{1}{2} \norm{ X_k-A^{\dagger}}^2\\
 & =& -\frac{1}{2} \norm{E - (A^{\dagger}-X_k) }^2 + \frac{1}{2} \norm{X_k -A^{\dagger}}^2.
\end{eqnarray*}
  Substituting $E = X-X_k$, maximizing in $\Gamma$ and minimizing in $X$ while observing the constraint~\eqref{eq:LagSAXASEsoltemp}, we arrive at~\eqref{eq:SAXASdual}.
  
 Furthermore,  substituting  $X_k =0$ and $S =I$ in~\eqref{eq:SAXAS} and~\eqref{eq:SAXASdual} gives~\eqref{eq:AXA} and~\eqref{eq:AXAdual}, respectively, thus~\eqref{eq:AXA} and~\eqref{eq:AXAdual} are indeed equivalent dual formulations.  
 Finally, substituting $X_k =0$ and $S =I$ in~\eqref{eq:SAXASsol} and using properties P1 and P2, it is not hard to see that~\eqref{eq:SAXASsol} is equal to $A^\dagger,$ and thus~\eqref{eq:AXA} and~\eqref{eq:AXAdual} are indeed alternative characterizations of the pseudoinverse.  
 \hfill\qed
\end{proof}
  
One of the insights given by the dual formulation~\eqref{eq:SAXASdual} is that the resulting method is monotonic, that is, the
error $\norm{X_{k+1} -A^\dagger}$ must be a decreasing sequence. Inspired on the constraint in~\eqref{eq:SAXAS}, we refer to the class of methods defined by~\eqref{eq:SAXAS} as the \texttt{SAXAS} methods.

\subsection{Convergence}
\label{sec:conv}
Proving the convergence of the iterates~\eqref{eq:SAXASsol} follows the same machinery as the convergence proof in Section~\ref{subsec:ATAXconv}. But different from Section~\ref{subsec:ATAXconv} the resulting convergence rate $\rho$ may be equal to one $\rho =1$. We determine discrete distributions for $S$ such that $\rho <1$ in Section~\ref{sec:rateSAXAS}.

The first step of proving convergence is the following invariance result.
\begin{lemma}\label{lem:invariantAXA} 
 Let $A,W\in\R^{n \times n}$ be  symmetric matrices. If $X_0  = A W A$ then for each $k\geq 0$ there exists  matrix $Q_k\in \R^{n\times m}$ such that the iterates~\eqref{eq:SAXASsol} satisfy $X_k-A^\dagger = A Q_k A.$
\end{lemma}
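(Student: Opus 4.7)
The plan is to prove, by induction on $k$, a slightly stronger invariant: under the hypothesis $X_0 = AWA$, every iterate itself lies in the subspace $\mathcal{S} \eqdef \{ATA : T \in \R^{n\times n}\}$, i.e.\ there exist matrices $V_k$ with $X_k = A V_k A$. Once this is established, the lemma follows immediately by subtracting an analogous representation of $A^\dagger$. This mirrors the structure of the earlier invariance result (Lemma~\ref{lem:invariant}) for the \texttt{SATAX} method: there one showed that the range of $X_k - A^\dagger$ sits inside $\Range{A^\top A}$; here, membership in the two-sided subspace $\mathcal{S}$ plays the analogous role forced by the symmetric sketch.

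For the inductive step, the base case $k=0$ is exactly the hypothesis (take $V_0 = W$). For the step from $k$ to $k+1$, I would inspect the closed-form update~\eqref{eq:SAXASsol} and observe that, using $A^\top = A$, every term in the increment $X_{k+1} - X_k$ is flanked on the left and right by a factor of $A$:
\begin{equation*}
X_{k+1} - X_k \;=\; A \cdot \bigl[ S (S^\top A^2 S)^\dagger S^\top (A - A X_k A) S (S^\top A^2 S)^\dagger S^\top \bigr] \cdot A.
\end{equation*}
Hence if $X_k = A V_k A$, then $X_{k+1} = A V_{k+1} A$, where $V_{k+1}$ is $V_k$ plus the bracketed matrix. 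This closes the induction and establishes $X_k \in \mathcal{S}$ for all $k$.

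Finally I would observe that $A^\dagger \in \mathcal{S}$ as well. This is immediate from the dual formulation~\eqref{eq:AXAdual}, whose constraint $X = A \Gamma A$ is satisfied at the optimum $X = A^\dagger$; thus there exists $\Gamma_\star$ with $A^\dagger = A \Gamma_\star A$. (If one prefers a self-contained verification, a short calculation using symmetry of $A$ and Lemma~\ref{lem:pseudo} shows $A (A^\dagger)^3 A = A^\dagger$.) Subtracting then gives $X_k - A^\dagger = A(V_k - \Gamma_\star) A$, so setting $Q_k \eqdef V_k - \Gamma_\star$ yields the claim. I do not anticipate a genuine obstacle: the only conceptual point is recognizing that the symmetric sketch was designed precisely so that increments live in $\mathcal{S}$, and that the dual characterization~\eqref{eq:AXAdual} already certifies $A^\dagger \in \mathcal{S}$.
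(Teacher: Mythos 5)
Your proof is correct and follows essentially the same route as the paper: an induction showing every iterate stays in the subspace $\{ATA : T\in\R^{n\times n}\}$, combined with the observation from the dual formulation~\eqref{eq:AXAdual} that $A^\dagger = A\Gamma_\star A$, and then subtraction. The only cosmetic difference is that you read the increment off the explicit update~\eqref{eq:SAXASsol}, whereas the paper reads it off the constraint $X = X_k + AS\Gamma S^\top A$ in~\eqref{eq:SAXASdual}; both give the same invariant.
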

\begin{proof}
Using induction and the constraint in~\eqref{eq:SAXASdual} we have that $X_{k+1} = A W_{k+1} A$ where $W_{k+1}= W_k+ S\Gamma S^\top$. Furthermore, from the constraint in~\eqref{eq:SAXASdual}, we have that there exists $\Gamma$ such that $A^\dagger = A \Gamma A.$ Thus $X_{k+1} -A^{\dagger} = A Q_{k+1}A$ with $Q_{k+1}= W_{k+1} - \Gamma.$\hfill\qed
\end{proof}

\begin{theorem} Let $A,W\in\R^{n \times n}$ be  symmetric matrices. If  $X_0  = A W A$ then the iterates~\eqref{eq:SATAXsol} converge according to
\begin{equation} \label{eq:AXAL2conv}
 \E{\norm{X_{k} - A^\dagger}^2 } \quad \leq \quad
   \rho ^{k}  \norm{X_{0}-A^\dagger}^2,
   \end{equation}
   where 
   \begin{equation} \label{eq:rhoAXA}
 \rho \quad\eqdef \quad 1- \inf_{\colvec{R = A Q A, Q \in \R^{n\times n} \\ \norm{R}^2 =1}} \dotprod{\E{ZRZ},R},
 \end{equation}
 and
 \begin{equation}
 Z  \eqdef  A S (S^\top A^2 S)^{\dagger} S^\top A.
 \end{equation}
\end{theorem}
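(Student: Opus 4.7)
The plan is to mimic the machinery of the \texttt{SATAX} convergence proof, the key new ingredient being that the symmetric sketch produces a symmetric, idempotent map $Z$. Setting $R_k \eqdef X_k - A^\dagger$ and $M \eqdef S(S^\top A^2S)^\dagger S^\top$, my first step would be to rewrite the update~\eqref{eq:SAXASsol} in a compact form. The pseudoinverse identity $(S^\top A^2S)^\dagger(S^\top A^2S)(S^\top A^2S)^\dagger = (S^\top A^2S)^\dagger$ gives $MA^2M = M$, whence $Z^2 = AMA^2MA = AMA = Z$; combined with the manifest symmetry $Z = Z^\top$, this identifies $Z$ as an orthogonal projector. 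Inserting $A = AA^\dagger A$ (property P1) in the middle of the double-sketch term $AMAMA$ yields $AMAMA = (AMA)A^\dagger(AMA) = ZA^\dagger Z$, so the update collapses to
\[ X_{k+1} = X_k + ZA^\dagger Z - ZX_kZ. \]
Subtracting $A^\dagger$ from both sides delivers the clean recursion $R_{k+1} = R_k - ZR_kZ$.

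Second, I would expand $\norm{R_{k+1}}^2 = \norm{R_k}^2 - 2\dotprod{R_k, ZR_kZ} + \norm{ZR_kZ}^2$ and shrink the last term. A short trace calculation using $Z = Z^\top$, $Z^2 = Z$ and cyclic invariance of the trace gives $\norm{ZR_kZ}^2 = \Tr{R_k^\top Z R_k Z} = \dotprod{R_k, ZR_kZ}$, so the cross-term and the square collapse into a single term:
\[ \norm{R_{k+1}}^2 = \norm{R_k}^2 - \dotprod{R_k, ZR_kZ}. \]
Taking conditional expectation in $Z$ given $R_k$ turns this into $\E{\norm{R_{k+1}}^2 \mid R_k} = \norm{R_k}^2 - \dotprod{R_k, \E{ZR_kZ}}$.

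To close the loop, I would invoke Lemma~\ref{lem:invariantAXA}: since $X_0 = AWA$, every iterate satisfies $R_k = AQ_kA$, so $R_k$ always lies in the set over which the infimum defining $\rho$ is taken. Because $R \mapsto \dotprod{\E{ZRZ}, R}$ is a quadratic form, homogeneity extends the unit-norm bound in~\eqref{eq:rhoAXA} to $\dotprod{R_k, \E{ZR_kZ}} \geq (1-\rho)\norm{R_k}^2$ for every nonzero $R_k$ of this form (and trivially when $R_k = 0$). Substituting gives $\E{\norm{R_{k+1}}^2 \mid R_k} \leq \rho\,\norm{R_k}^2$; taking outer expectation and unrolling the resulting recursion yields~\eqref{eq:AXAL2conv}.

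The main obstacle I anticipate is the algebraic reduction to $R_{k+1} = R_k - ZR_kZ$, which requires spotting both the idempotency $Z^2 = Z$ (via $MA^2M = M$, rather than the weaker $MAM$-type cancellation) and the well-placed insertion of $AA^\dagger A$ inside the double-sketched product $AMAMA$. Once this identity is in hand the rest is routine: the identity $\norm{ZR_kZ}^2 = \dotprod{R_k, ZR_kZ}$ and the invariance lemma make the argument essentially parallel to the \texttt{SATAX} case, with $Z$ replacing the one-sided projector from Section~\ref{subsec:ATAXconv}.
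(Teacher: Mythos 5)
Your proposal is correct and follows essentially the same route as the paper's proof: you derive the recursion $R_{k+1}=R_k-ZR_kZ$ by inserting $A=AA^\dagger A$ into the sketched residual, use idempotency and symmetry of $Z$ to collapse the squared-norm expansion to $\norm{R_k}^2-\dotprod{R_k,ZR_kZ}$, then invoke Lemma~\ref{lem:invariantAXA} together with the definition of $\rho$ before taking expectations and unrolling. The only (welcome) difference is that you verify explicitly that $Z$ is an orthogonal projector via $MA^2M=M$, a step the paper merely asserts.
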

\begin{proof}
Let $R_k = X_k - A^\dagger$.
Using 
\begin{equation}  \label{eq:BAXA}
 S^\top(A-AX_kA)S \overset{ (P1) }{=} S^\top A(A^{\dagger}-X_k) AS = S^\top A R_k AS,\end{equation}
and subtracting $A^\dagger$ from both sides of~\eqref{eq:SAXASsol} gives 
\begin{equation}\label{eq:SAXASR}
 R_{k+1} = R_k -  Z R_k Z.
\end{equation}
Applying the properties of the pseudoinverse, it can be  shown that $Z$ is a projection matrix, whence $Z^2 = Z$. 
Taking norms  and expectation conditioned on $R_k$ on both sides gives
\begin{eqnarray}
 \E{\norm{R_{k+1}}^2  \, | \, R_k}& = &
  \E{ \dotprod{R_k -  Z R_k Z, R_k - Z R_k Z} \, | \, R_k} \nonumber \\
  &\overset{Z \mbox{ \small is a proj.}}{=}&  \norm{R_k}^2 - \dotprod{ \E{Z R_k Z},R_k}.\label{eq:SAXASconv1}
\end{eqnarray}
By Lemma~\ref{lem:invariantAXA} and~\eqref{eq:SAXASconv1} we have that
\begin{eqnarray}
 \E{\norm{R_{k+1}}^2  \, | \, R_k}
  & \overset{\eqref{eq:SAXASconv1}}{=}&  \norm{R_k}^2 - \dotprod{ \E{Z\frac{R_k}{\norm{R_k}}Z},\frac{R_k}{\norm{R_k}}} \norm{R_k}^2. \nonumber \\
  & \overset{\text{Lemma~\ref{lem:invariantAXA}}+\eqref{eq:rhoAXA}}{\leq}&
  \rho  \norm{R_k}^2.
\end{eqnarray}
It remains to take  expectations again, apply the tower property, and unroll the recurrence.\hfill \qed
\end{proof}

The method described in \eqref{eq:SAXAS} is particularly well suited to calculating an approximation to the pseudoinverse of symmetric matrices, since symmetry is preserved by the method.
\begin{lemma} [Symmetry invariance] If $X_0 = X_0^\top$ and $A = A^\top$ then the iterates~\eqref{eq:SAXASsol} are symmetric.
\end{lemma}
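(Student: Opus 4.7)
The plan is to proceed by induction on $k$, using the explicit formula~\eqref{eq:SAXASsol} for the iterates. The base case is given by hypothesis, so the whole argument reduces to showing that if $X_k = X_k^\top$ (and $A = A^\top$), then the correction term
\[
A S (S^\top A^2 S)^{\dagger} S^\top ( A - AX_kA) S (S^\top A^2 S)^\dagger S^\top A
\]
is itself a symmetric matrix; adding a symmetric update to a symmetric iterate preserves symmetry.

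To verify that the correction is symmetric, I would introduce two shorthand matrices, namely $M \eqdef S^\top A^2 S$ and $B \eqdef S^\top (A - A X_k A) S$, and check three symmetry facts in order. First, $M = M^\top$ because $A = A^\top$, from which it follows that $M^\dagger = (M^\dagger)^\top$ (the pseudoinverse of a symmetric matrix is symmetric, by Lemma~\ref{lem:pseudo}). Second, $A - A X_k A$ is symmetric, since $(A - AX_k A)^\top = A^\top - A^\top X_k^\top A^\top = A - A X_k A$ by the inductive hypothesis and the symmetry of $A$; hence $B = B^\top$. Third, the correction term has the form $C M^\dagger B M^\dagger C^\top$ with $C \eqdef AS$, and a direct transposition gives
\[
(C M^\dagger B M^\dagger C^\top)^\top = C (M^\dagger)^\top B^\top (M^\dagger)^\top C^\top = C M^\dagger B M^\dagger C^\top,
\]
using the three symmetry facts just listed.

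There is essentially no obstacle here: the argument is a sandwich-pattern transpose computation, and the only nontrivial input is that the pseudoinverse of a symmetric matrix is symmetric, which is an immediate consequence of the SVD definition (or equivalently of~\eqref{it:consprop0}). The induction closes with $X_{k+1} = X_k + (\text{symmetric matrix}) = X_{k+1}^\top$, completing the proof.
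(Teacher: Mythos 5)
Your proof is correct. It is the same basic idea as the paper's --- induction on $k$, showing that the update added to $X_k$ is symmetric --- but you carry it out from the explicit formula~\eqref{eq:SAXASsol}, whereas the paper disposes of the lemma in one line by appealing to the constraint $X = X_k + AS\Gamma S^\top A$ in the dual formulation~\eqref{eq:SAXASdual} together with induction. The two routes are worth contrasting: the dual-constraint argument is shorter, but as stated it is incomplete on its own, since $AS\Gamma S^\top A$ is symmetric only if the multiplier $\Gamma$ is symmetric, and that fact is only visible once one knows the explicit solution, where $\Gamma = -(S^\top A^2 S)^\dagger S^\top (A - AX_kA) S (S^\top A^2 S)^\dagger$ inherits symmetry from $A$ and $X_k$. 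Your sandwich computation $\left(C M^\dagger B M^\dagger C^\top\right)^\top = C M^\dagger B M^\dagger C^\top$, with $M = S^\top A^2 S$, $B = S^\top(A - AX_kA)S$, $C = AS$, and $(M^\dagger)^\top = (M^\top)^\dagger = M^\dagger$ from~\eqref{it:consprop0}, is exactly the verification the paper's terse proof leaves implicit, so your version is in fact the more self-contained of the two.
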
 
\begin{proof}
The constraint in~\eqref{eq:SAXASdual} and induction shows that $X_k = X_k^\top$ holds for any $k.\hfill\qed$
\end{proof}



\subsection{The rate of convergence} 
\label{sec:rateSAXAS}

It is not immediately obvious that \eqref{eq:rhoSATAX} is a valid rate. That is, is it the case that $0 \leq \rho \leq 1$? We give an affirmative answer to this  in Lemma~\ref{lem:rhoAXArealrate}. 
Subsequently, in Lemma~\ref{lem:sufrhonotzero} we establish necessary and sufficient  conditions on discrete distribution $\cal D$ to characterize when $\rho <1$. Consequently, under these conditions a linear convergence rate is guaranteed. 

To establish the next results we make use of vectorization and the Kronecker product so that we can leverage on classic results in linear algebra. For convenience, we state several well known properties and equalities involving Kronecker products in the following lemma.
But first, the Kronecker product of matrices $A\in \R^{m\times n}$ and $B\in \R^{p\times q}$ is defined as
\begin{equation}(A\otimes B)_{p(r-1)+i, q(s-1)+j} = a_{rs} b_{ij}.\label{eq:defkron}\end{equation}
Let $\vec{A} \in \R^{nm}$ denote the vector obtained by stacking the columns of the matrix $A$ on top of one another.
\begin{lemma}[Properties Kronecher products]\label{lem:kron}
For matrices $A, B$ and $C$ of compatible dimensions we have that
\begin{enumerate}
\item \label{it:kronprod}  $\overrightarrow{ABC}= (C^\top \otimes A) \vec{B},$
\item \label{it:krontrans} $(C\otimes B)^\top = C^\top \otimes B^\top.$
\item \label{it:kronpsd} If $A$ and $B$ are symmetric positive semidefinite then $B\otimes A$ is symmetric positive semidefinite.
\item \label{it:kronexp} Since both vectorization and expectation are linear operators, if $Z$ is a random matrix then $\overrightarrow{\E{Z}} = \E{\vec{Z}}.$
\end{enumerate}
\end{lemma}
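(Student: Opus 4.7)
The plan is to verify the four identities essentially directly from the definition~\eqref{eq:defkron} of the Kronecker product and from the definition of $\vec{\cdot}$, in the order stated. Properties~\ref{it:krontrans} and~\ref{it:kronexp} are one-line consequences of the definitions, and property~\ref{it:kronpsd} reduces, via the standard \emph{mixed-product rule} $(AC)\otimes(BD)=(A\otimes B)(C\otimes D)$, to exhibiting $B\otimes A$ as a Gram matrix.

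For property~\ref{it:kronprod}, I would argue by component comparison. With $B\in\R^{n\times p}$, the $(i+m(j-1))$-th entry of $\overrightarrow{ABC}$ is $(ABC)_{ij}=\sum_{r,s} a_{ir}b_{rs}c_{sj}$. On the other side, a direct application of~\eqref{eq:defkron} to $C^\top\otimes A$ shows that its $(i+m(j-1),\,r+n(s-1))$-entry equals $c_{sj}\,a_{ir}$; multiplying by the $(r+n(s-1))$-th entry of $\vec{B}$, namely $b_{rs}$, and summing over $r,s$ recovers the same double sum. The only care needed is in the indexing; the arithmetic is routine.

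Property~\ref{it:krontrans} follows by swapping row and column indices in~\eqref{eq:defkron}: the $(p(r-1)+i,\,q(s-1)+j)$-entry of $C\otimes B$ is $c_{rs}b_{ij}$, which coincides with the $(q(s-1)+j,\,p(r-1)+i)$-entry of $C^\top\otimes B^\top$ computed via the same formula. For property~\ref{it:kronpsd}, I would first derive the mixed-product rule as a short consequence of property~\ref{it:kronprod} (for instance, by rewriting $\overrightarrow{(AC)X(BD)}$ in two different ways for an arbitrary $X$), then use the fact that every symmetric PSD matrix admits a factorization $A=L_AL_A^\top$, and similarly $B=L_BL_B^\top$. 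Combining the mixed-product rule with property~\ref{it:krontrans} then gives $B\otimes A = (L_B\otimes L_A)(L_B\otimes L_A)^\top$, which is visibly symmetric PSD.

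Finally, property~\ref{it:kronexp} is essentially a tautology: $\vec{\cdot}$ is a fixed linear bijection $\R^{n\times m}\to\R^{nm}$ that merely permutes the entries of its input, and expectation commutes with any such fixed linear operator, so $\overrightarrow{\E{Z}}=\E{\vec{Z}}$. I do not foresee any real obstacle in any of these four pieces; the only nuisance is the index bookkeeping in property~\ref{it:kronprod}, and even that is standard.
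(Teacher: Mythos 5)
Your proposal is correct. Note that the paper itself offers no proof of this lemma: it is stated as a collection of well-known facts, so there is no in-paper argument to compare against; your write-up simply supplies the standard verification that the authors omit. The index bookkeeping in item~\ref{it:kronprod} checks out against the paper's convention~\eqref{eq:defkron} (the $(i+m(j-1),\,r+n(s-1))$-entry of $C^\top\otimes A$ is indeed $c_{sj}a_{ir}$, and the $(r+n(s-1))$-th entry of $\vec{B}$ is $b_{rs}$), the transpose identity in item~\ref{it:krontrans} follows exactly as you say by swapping the row and column index blocks, the route to item~\ref{it:kronpsd} via the mixed-product rule (itself derivable from item~\ref{it:kronprod} by evaluating $\overrightarrow{(AC)X(BD)}$ two ways) together with a factorization $A=L_AL_A^\top$, $B=L_BL_B^\top$ and item~\ref{it:krontrans} gives the Gram form $(L_B\otimes L_A)(L_B\otimes L_A)^\top$, and item~\ref{it:kronexp} is, as you observe, immediate from linearity of vectorization and expectation.
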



\begin{lemma} \label{lem:rhoAXArealrate}The rate~\eqref{eq:rhoAXA} satisfies
$0 \leq \rho \leq 1.$
Furthermore, if 
\begin{equation} \label{eq:AXArhonot0}
\{R \, : \, \E{ZRZ} =0\} \subset \{ R \, : \, ARA =0\}, 
\end{equation}
then 
\begin{equation}
\rho \leq 1 -\lambda_{\min}^+(\E{Z \otimes Z}) <1,
\label{eq:rhoAXAlambda}\end{equation}
and the iterates~\eqref{eq:SATAXsol} converge.
\end{lemma}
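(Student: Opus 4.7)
My plan breaks the lemma into two claims, $0 \leq \rho \leq 1$ and the refined bound $\rho \leq 1 - \lambda_{\min}^+(\E{Z\otimes Z}) < 1$, and then reads off convergence.

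For the first claim I would translate the quadratic form in~\eqref{eq:rhoAXA} into Kronecker form. Using items~\ref{it:kronprod}, \ref{it:krontrans}, \ref{it:kronexp} of Lemma~\ref{lem:kron} together with the symmetry $Z^\top = Z$ (which is immediate from the formula for $Z$ since $A$ is symmetric and pseudoinverses of symmetric matrices are symmetric), we get
\[\dotprod{\E{ZRZ}, R} = \vec{R}^\top \E{Z \otimes Z}\, \vec{R}.\]
Since $Z$ is an orthogonal projection (as already noted just before~\eqref{eq:SAXASconv1}), $0 \preceq Z \preceq I$, and item~\ref{it:kronpsd} of Lemma~\ref{lem:kron} together with the identity
\[I - Z\otimes Z = (I-Z)\otimes I + Z\otimes (I-Z)\]
exhibits $I - Z\otimes Z$ as a sum of PSD matrices, so $0 \preceq Z \otimes Z \preceq I$. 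Both bounds survive taking expectation, so $0 \leq \vec{R}^\top \E{Z \otimes Z} \vec{R} \leq \|R\|^2$ on the unit sphere in~\eqref{eq:rhoAXA}, giving $0 \leq \rho \leq 1$.

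For the second claim, set $M := \E{Z \otimes Z}$ and $V := \{\vec{R} : R = AQA,\ Q \in \R^{n\times n}\}$, so that the infimum in~\eqref{eq:rhoAXA} is over the unit sphere of $V$. The key step is to prove $V \subseteq \Range{M}$: granting this, every unit $\vec{R} \in V$ satisfies $\vec{R}^\top M \vec{R} \geq \lambda_{\min}^+(M)$, and~\eqref{eq:rhoAXAlambda} follows upon taking the infimum. Since $M$ is symmetric PSD, $\Range{M} = \Null{M}^\perp$, so it suffices to verify $V \perp \Null{M}$. If $\vec{R_N} \in \Null{M}$, then $\E{ZR_NZ} = 0$ by item~\ref{it:kronexp} of Lemma~\ref{lem:kron}, and hypothesis~\eqref{eq:AXArhonot0} upgrades this to $A R_N A = 0$. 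For any $R = AQA \in V$, cyclic invariance of the trace gives
\[\dotprod{R, R_N} = \Tr{A Q^\top A\, R_N} = \Tr{Q^\top (A R_N A)} = 0,\]
so $V \perp \Null{M}$, as required. Strict positivity $\lambda_{\min}^+(M) > 0$ is automatic as soon as $M \neq 0$: if $M = 0$ the hypothesis would force $A R A = 0$ for every $R$, hence $A = 0$. Once $\rho < 1$, linear $L^2$ convergence $\E{\|X_k - A^\dagger\|^2} \to 0$ is immediate from~\eqref{eq:AXAL2conv}.

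The main obstacle I foresee is precisely the inclusion $V \subseteq \Range{M}$: without it, the PSD inequality only controls the range-component of $\vec{R}$ and yields a strictly weaker rate. Hypothesis~\eqref{eq:AXArhonot0} is engineered to rule out any null-space component of elements of $V$; the rest of the argument is bookkeeping with Kronecker identities and the already-established fact that $Z$ is a projection.
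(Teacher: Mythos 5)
Your proposal is correct and follows essentially the same route as the paper: both parts rest on $Z$ being an orthogonal projection (giving $0\leq\rho\leq1$) and on vectorizing so that hypothesis~\eqref{eq:AXArhonot0} becomes the inclusion of $\{\vec{R}:R=AQA\}$ in $\Range{\E{Z\otimes Z}}$, whence the infimum in~\eqref{eq:rhoAXA} is bounded below by $\lambda_{\min}^+(\E{Z\otimes Z})$. Your variations --- the sandwich $0\preceq Z\otimes Z\preceq I$ in place of the paper's trace estimates, and the direct verification that $V\perp\Null{\E{Z\otimes Z}}$ in place of taking orthogonal complements of the null-space inclusion --- are cosmetic, and your explicit handling of the degenerate case $\E{Z\otimes Z}=0$ (forcing $A=0$) is a small point the paper glosses over.
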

\begin{proof}
Since $Z$ is positive semidefinite we have that 
\[\dotprod{ZRZ,R} = \Tr{R^\top ZRZ} = \Tr{Z^{1/2}R^\top ZRZ^{1/2}} \geq 0. \]
 Taking expectation in the above gives that $\rho \leq 1.$ Furthermore, since $Z$ is a projection matrix,
\begin{eqnarray*}
\dotprod{ZRZ,R} &=& \Tr{R^\top ZRZ}\\
& \leq & \Tr{R^\top ZR}  \underbrace{\lambda_{\max}(Z)}_{=1} \\
& = & \Tr{ ZRR^\top} \\
& \leq & \Tr{RR^\top}  \lambda_{\max}(Z) = \norm{R}^2.
\end{eqnarray*}
Dividing by $\norm{R}^2$ and taking expectation over $Z$  gives
\begin{equation}\frac{\dotprod{\E{ZRZ},R}}{\norm{R}^2} \leq 1. 
\label{eq:adj9823r} \end{equation}
Thus, for any $R \neq 0$, we have that
\[\rho \overset{\eqref{eq:rhoAXA}}{ \geq} 1-\frac{\dotprod{\E{ZRZ},R}}{\norm{R}^2} \overset{\eqref{eq:adj9823r}}{ \geq} 0,  \]
which concludes the proof that $0 \leq \rho \leq 1.$

After vectorizing and using item~\ref{it:kronprod} of Lemma~\ref{lem:kron}, the condition~\eqref{eq:AXArhonot0} is equivalent to
\begin{equation} \label{eq:asd98ja9}
\{\vec{R} \, : \, \E{Z\otimes Z}\vec{R} =0\} =\Null{\E{Z\otimes Z}} \overset{\eqref{eq:AXArhonot0}}{ \subset} \{ \vec{R} \, : \, (A \otimes A) \vec{R} =0\} = \Null{A \otimes A}. \end{equation}
Since $Z$  is symmetric positive semidefinite,  item~\ref{it:kronpsd} of Lemma~\ref{lem:kron} states that the matrix $Z\otimes Z$, and consequently $\E{Z\otimes Z}$, are symmetric positive semidefinite.
Thus taking orthogonal complements in~\eqref{eq:asd98ja9} we have
\begin{equation} \Range{A \otimes A}  \subset \Range{\E{Z\otimes Z}}.
\label{eq:oipawdk38rja}
\end{equation}
Therefore, using vectorization we have
\begin{eqnarray}
\inf_{\colvec{R = A Q A, Q \in \R^{n\times n} \\ \norm{R}^2 =1}} \dotprod{\E{ZRZ},R} & =& 
\inf_{\colvec{R \in \Range{A \otimes A}\\ \norm{R}^2 =1}} \dotprod{\E{Z\otimes Z}\vec{R},\vec{R}}_2 \nonumber \\
& \overset{\eqref{eq:oipawdk38rja} }{\geq } & \inf_{\colvec{R \in \Range{\E{Z \otimes Z}}\\ \norm{R}^2 =1}} \dotprod{\E{Z\otimes Z}\vec{R},\vec{R}}_2 \nonumber \\
& \overset{\eqref{eq:lambdamindef}}{=}& \lambda_{\min}^+ (\E{Z\otimes Z}) >1, \label{eq:randsau9}
\end{eqnarray}
where we have used that for any $G$ positive semi-definite we have
\begin{equation} \label{eq:lambdamindef} \lambda_{\min}^+(G) = \inf_{\colvec{x \in \Null{G}^\perp \\ \norm{x}_2 =1}} \dotprod{Gx,x}.
\end{equation}
Combining~\eqref{eq:randsau9} with~\eqref{eq:rhoAXA} gives the desired result~\eqref{eq:rhoAXAlambda}.
\hfill \qed
\end{proof}

\subsubsection{Characterization of $\rho <1$ for discrete distributions}
The following lemma gives a practical characterization of the condition~\eqref{eq:AXArhonot0} for discrete distributions. 

\begin{lemma}\label{lem:sufrhonotzero}
Let $S$ be a random matrix with a discrete distribution such that 
$\mathbb{P}(S=S_i) = p_i >0,$ where $S_i \in \R^{n \times q_i}$ for $i =1,\ldots, r.$ Let
\begin{equation}\label{eq:Sotimesstack} \mathbb{S} \eqdef\left(\colvec{S_1^\top \otimes S_1^\top\\ \vdots \\ S_r^\top \otimes S_r^\top }\right) \in \R^{\sum_{i=1}^rq_i^2 \times n}. \end{equation}
Then the iterates~\eqref{eq:SAXASsol} converge according to Theorem~\ref{theo:SAXAS} with a rate $\rho<1$  if
\begin{equation}\label{eq:sufrhonotzero}
\bigcap_{i=1}^r \{R \,: \, S_i^\top ARA S_i =0\} \subset \{R\, : \, ARA =0\}.
\end{equation}
Equivalently, condition~\eqref{eq:sufrhonotzero} holds if and only if
\begin{equation} \label{eq:sufrhonotzerovec}
 \Null{\mathbb{S} \, (A \otimes A)}\subset  \Null{A \otimes A}.\end{equation}
\end{lemma}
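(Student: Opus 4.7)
The strategy is to reduce the hypothesis~\eqref{eq:sufrhonotzero} to the sufficient condition~\eqref{eq:AXArhonot0} in Lemma~\ref{lem:rhoAXArealrate}, and to handle the equivalence with~\eqref{eq:sufrhonotzerovec} by a direct vectorization argument. Write $Z_i \eqdef A S_i (S_i^\top A^2 S_i)^{\dagger} S_i^\top A$, so that $\mathbb{P}(Z = Z_i) = p_i$. Each $Z_i$ is an orthogonal projection (as noted in the previous proofs), and $\E{ZRZ} = \sum_i p_i Z_i R Z_i$.

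First I would show the chain of equivalences
\[\E{ZRZ}=0 \;\Longleftrightarrow\; Z_i R Z_i =0 \text{ for all } i \;\Longleftrightarrow\; S_i^\top ARA S_i =0 \text{ for all } i.\]
For the first equivalence, taking the inner product with $R$ and using $Z_i^2=Z_i$ gives
\[\dotprod{\E{ZRZ},R} \;=\; \sum_i p_i \Tr{R^\top Z_i R Z_i} \;=\; \sum_i p_i \Tr{(Z_i R Z_i)^\top (Z_i R Z_i)} \;=\; \sum_i p_i \norm{Z_i R Z_i}^2,\]
so $\E{ZRZ}=0$ forces each $Z_i R Z_i$ to vanish; the converse is immediate. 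For the second equivalence, one direction is trivial. For the nontrivial direction, set $M = S_i^\top ARA S_i$ and observe $Z_i R Z_i = 0$ means $A S_i (S_i^\top A^2 S_i)^{\dagger} M (S_i^\top A^2 S_i)^{\dagger} S_i^\top A = 0$. Left- and right-multiplying by $S_i^\top A$ and $A S_i$ respectively yields $P M P = 0$, where $P = (S_i^\top A^2 S_i)(S_i^\top A^2 S_i)^{\dagger}$ is the orthogonal projection onto $\Range{S_i^\top A^2 S_i}$. Since $A$ is symmetric, $S_i^\top A^2 S_i = (AS_i)^\top(AS_i)$ is PSD with $\Range{S_i^\top A^2 S_i} = \Range{S_i^\top A}$, and both columns of $M$ and of $M^\top$ lie in $\Range{S_i^\top A}$. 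Hence $PMP = M$, giving $M = 0$.

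Combining this with hypothesis~\eqref{eq:sufrhonotzero} yields the implication $\E{ZRZ} = 0 \Rightarrow ARA = 0$, which is exactly condition~\eqref{eq:AXArhonot0}. Lemma~\ref{lem:rhoAXArealrate} then delivers $\rho < 1$ and the convergence claim.

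Finally, for the equivalence of~\eqref{eq:sufrhonotzero} and~\eqref{eq:sufrhonotzerovec}, apply item~\ref{it:kronprod} of Lemma~\ref{lem:kron} twice to obtain
\[\overrightarrow{S_i^\top ARA S_i} \;=\; (S_i^\top \otimes S_i^\top)\,\overrightarrow{ARA} \;=\; (S_i^\top \otimes S_i^\top)(A\otimes A)\vec R,\]
using $A^\top = A$ in the second step. Stacking over $i$, the condition $S_i^\top ARA S_i = 0$ for all $i$ becomes $\mathbb{S}(A\otimes A)\vec R = 0$, while $ARA = 0$ becomes $(A\otimes A)\vec R = 0$. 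The equivalence of~\eqref{eq:sufrhonotzero} and~\eqref{eq:sufrhonotzerovec} follows immediately.

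\textbf{Main obstacle.} The only genuinely nontrivial step is the implication $Z_i R Z_i = 0 \Rightarrow S_i^\top ARA S_i = 0$: one must simultaneously know that the range and corange of $M = S_i^\top ARA S_i$ are contained in $\Range{S_i^\top A^2 S_i}$. This is where the symmetry of $A$ (ensuring $A^2$ is PSD and $\Range{S_i^\top A^2 S_i} = \Range{S_i^\top A}$) is essential; everything else is routine.
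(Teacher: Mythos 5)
Your proof is correct and follows essentially the same route as the paper: reduce the hypothesis to condition~\eqref{eq:AXArhonot0} of Lemma~\ref{lem:rhoAXArealrate} by showing $\E{ZRZ}=0$ holds iff $S_i^\top A R A S_i=0$ for every $i$, and then vectorize via Lemma~\ref{lem:kron} to obtain the equivalence with~\eqref{eq:sufrhonotzerovec}. The only difference is cosmetic: where you verify the intermediate equivalences with the projection identity $\dotprod{\E{ZRZ},R}=\sum_i p_i\norm{Z_iRZ_i}^2$ and the projector $P=(S_i^\top A^2S_i)(S_i^\top A^2S_i)^\dagger$, the paper works with $\Null{Z_i\otimes Z_i}$ and two applications of Lemma~\ref{lem:09709s} to get $\Null{Z_i}=\Null{S_i^\top A}$ — both are sound.
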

\begin{proof}
We show that~\eqref{eq:AXArhonot0} and~\eqref{eq:sufrhonotzero} are equivalent, therefore convergence of the iterates~\eqref{eq:SAXASsol} with $\rho < 1$ is guaranteed by Lemma~\ref{lem:rhoAXArealrate}. 
First, note once more that $\Null{\E{Z \otimes Z}} = \{\vec{R} \, : \,  \E{ZRZ} =0\}.$
Let $Z_i \eqdef A S_i (S_i^\top A^2 S_i)^{\dagger} S_i^\top A$ and note that $Z_i$ is a symmetric positive semidefinite matrix. Using the distribution of $S$ we have that $\vec{R} \in \Null{\E{Z \otimes Z}}$ is equivalent to
\begin{eqnarray}
 \E{ZRZ}  &=& 
 \sum_{i=1}^r p_i  Z_i R Z_i =0.
\end{eqnarray}
Since $Z_i$ is symmetric positive semidefinite by Lemma~\ref{lem:kron} items~\ref{it:kronpsd} and~\ref{it:kronexp} we have that $\E{Z_i \otimes Z_i}$ is positive definite, consequently
\begin{eqnarray}
\Null{\E{Z \otimes Z}} &=& \{\vec{R} \, : \,  \sum_{i=1}^r p_i (Z_i \otimes  Z_i) \vec{R} =0 \} \nonumber\\
&=& \{\vec{R} \, : \,   (Z_i \otimes  Z_i) \vec{R} =0, \quad \mbox{for } i=1,\ldots, r \} \nonumber \\
&= & \bigcap_{i=1}^r \Null{Z_i \otimes  Z_i}. \label{eq:lasd98j23}
\end{eqnarray}
Fix an index $i \in \{1,\ldots, r\}.$
The remainder of the proof is now dedicated to showing that 
$ \Null{Z_i \otimes  Z_i} = \{\vec{R} \, : \, S_i^\top A RA S_i =0 \}.$
To this end, we collect some facts. Given that 
\[\Null{ (S_i^\top A^2S_i)^\dagger} \overset{\eqref{it:consprop4}}{=}  \Null{S_i^\top A^2S_i} \overset{\text{Lemma~\ref{lem:09709s}} }{=} \Null{AS_i}, \quad \mbox{for } i=1,\ldots, r, \]
 we can apply Lemma~\ref{lem:09709s} once again with $G =(S_i^\top A^2S_i)^\dagger $ and $W = S_i^\top A$ which shows that 
\begin{equation}
\Null{Z_i} =\Null{A S_i (S_i^\top A^2 S_i)^{\dagger} S_i^\top A} \overset{\text{Lemma~\ref{lem:09709s} }}{=}  \Null{S_i^\top A}.
\label{eq:Zinullasdas}\end{equation}
Consequently
\begin{equation}
\Null{Z_i \otimes Z_i} =\{\vec{R} \, : \, Z_i R Z_i =0\} \overset{\eqref{eq:Zinullasdas}}{=} \{\vec{R} \, : \, S_i^\top A R Z_i =0\}
\overset{\eqref{eq:Zinullasdas}}{=} \{\vec{R} \, : \, S_i^\top A R A S_i =0\}
\label{eq:pjasdja89q}.\end{equation}
Finally
\[\Null{\E{Z \otimes Z}} \overset{\eqref{eq:lasd98j23}+\eqref{eq:pjasdja89q}}{=} \{ \vec{R} \, : \, S_i^\top A  R AS_i =0, \quad i=1,\ldots, r\} = \bigcap_{i=1}^r \{\vec{R} \, : \,S_i^\top A  R AS_i =0 \},\]
which proves that \eqref{eq:AXArhonot0} and \eqref{eq:sufrhonotzero} are equivalent.
Using vectorization, the condition~\eqref{eq:sufrhonotzero} can be rewritten as
$\{ v \, : \, ( S_i^\top \otimes S_i^\top ) (A \otimes A)v =0, \, \mbox{for } i =1,\ldots ,r \} \subset  \{ v\, : \, (A \otimes A)v =0\},$
which is clearly equivalent to~\eqref{eq:sufrhonotzerovec}.$\hfill\qed$
\end{proof}

Lemma~\ref{lem:sufrhonotzero} gives us a practical rule for designing a distribution for $S$ such that convergence is guaranteed. Given that $\Null{A \otimes A}$ is not known to us, the easiest way to ensure that~\eqref{eq:sufrhonotzerovec} holds is if we choose a distribution for $S$ such that $\mathbb{S}$ has a full column rank. Clearly~\eqref{eq:sufrhonotzero} holds when $S$ is a fixed invertible matrix with probability one, but this does not result in a practical method. In the next section we show how to construct $S$ so that $\mathbb{S}$ has a full column rank and results in a practical method.

\subsection{Discrete examples}

Based on the two sketching strategies presented in Section~\ref{subsec:SATAXexe}, we define two variants of the 
\texttt{SAXAS} method~\eqref{eq:SAXASsol}. Let the \texttt{SAXAS\_uni} and the \texttt{SAXAS\_ada} methods 
be the result of using a uniform $\tau$--batch sketching and an adaptive sketching with the \texttt{SAXAS} method, respectively. We found that these two variants work well in practice, as we show later on in Section~\ref{sec:numerics}.  
Though we observe in empirical experiments that the two variants of \texttt{SAXAS} converge in practice, it is hard to verify Lemma~\ref{lem:sufrhonotzero} and thus  prove convergence. 
So instead we introduce a new sketching very similar to the uniform $\tau$--batch sketching, but that allows us to easily prove convergence of the resulting method. 
\paragraph{$\tau$--batch sketching with replacement.}
 Let $S= I_{:v}$ where $v \in \{1,\ldots, n\}^\tau$ is an array  and $I_{:v} \in \R^{n \times \tau}$ is the column concatenation of the columns in the identity matrix $I$ indexed by $v.$ Furthermore, let $\mathbf{P}(S=I_{:v}) = p_v >0$ for each $v \in  \{1,\ldots, n\}^\tau.$
%
%


We refer to the \texttt{SAXAS} method with a $\tau$--batch sketching with replacement as the \texttt{SAXAS\_rep} method. 
As we will now show, under the condition that $\tau\geq 2$, the \texttt{SAXAS\_rep} method satisfies Lemma~\ref{lem:sufrhonotzero} and thus convergence of the \texttt{SAXAS\_rep} method is guaranteed. 
\paragraph{Convergence.} We will prove that \texttt{SAXAS\_rep} method converges by showing that 
the matrix $\mathbb{S}$ defined in~\eqref{eq:Sotimesstack} has full column rank, and thus according to Lemma~\ref{lem:sufrhonotzero} the iterates converge. First note that since the sampling is done over all $v \in  \{1,\ldots, n\}^\tau$, there are $n^\tau$ different sketching matrices. Thus $\mathbb{S} \in \R^{\tau n^\tau \times n^2}.$ 
 To prove that $\mathbb{S}$ has full column rank, we will show that for $\tau\geq 2$ that the row rank of $\mathbb{S}$ is $n^2$. Note that for $\tau =1$ the matrix $\mathbb{S}$ has $n$ rows, thus it is not possible for $\mathbb{S}$  to have full column rank. For simplicity, consider the case $\tau=2.$ Fix $i \in \{1,\ldots, n^2\}.$ We will now show that for the $i$th coordinate vector $e_i \in \R^{n^2}$, there exists $v\in \{1,\ldots, n\}^\tau$ such that $e_i$ is a row of $I_{:v}^\top \otimes I_{:v}^\top$, and consequently, $e_i$ is a row of $\mathbb{S}.$ First, for $v=(s, j)$ we have from the definition of Kronecker product~\eqref{eq:defkron} that 
 \begin{equation} (I_{:v}^\top \otimes I_{:v}^\top)_{2, n(s-1)+j} = [I_{:v}]_{1s} [I_{:v}]_{2j} =1.\label{eq:IsjIsjproduasd}\end{equation}
  Moreover, every other element on row $2$ of $I_{:v}^\top \otimes I_{:v}^\top$ is zero apart from the element in column $n(s-1)+j.$ Now note that the integer $i$ can be written as 
   \[  i = n\left\lfloor \frac{i}{n} \right\rfloor + \mod(i,n) =
  n\underbrace{\left(\left\lfloor \frac{i}{n} \right\rfloor -1\right)}_{=s-1} + \underbrace{\mod(i,n)+n}_{=j}.  \]
  By setting $s = \lfloor \frac{i}{n} \rfloor$ and $j = \mod(i,n)+n$, we have from the above that $n(s-1)+j =i$. Though there is problem when $\lfloor \frac{i}{n} \rfloor =0,$ since $s$ cannot be zero. To remedy this, consider the indices  
 \[ s= \begin{cases} 1 & \quad  \mbox{if } i <n \\
 \left\lfloor \frac{i}{n} \right\rfloor & \quad  \mbox{if } i \geq n,
  \end{cases} 
\qquad \mbox{and}  \qquad 
j= \begin{cases} i & \quad  \mbox{if } i <n \\
 \text{mod}(i,n) +n & \quad  \mbox{if } i \geq n.
  \end{cases}
  \]
  With $v=(s,j)$ we now have that the 2nd row of the matrix in~\eqref{eq:IsjIsjproduasd} is the $i$th unit coordinate vector in $\R^{n^2}.$  Consequently $\mathbb{S}$ has row rank $n^2$ and the \texttt{SAXAS\_rep} method converges.

 
\section{Projections and Full Rank Matrices} 
 
In this section we comment on calculating approximate projections onto the range space of a given matrix, and on certain specifics related to calculating the pseudoinverse of a full rank matrix.
 
\subsection{Calculating approximate range space projections}

 With very similar methods, we can calculate an approximate projection operator onto the range space of $A$. Note that $AA^\dagger$ projects onto $\Range{A}$ as can be seen by $(P1)$. But rather than calculate $A^\dagger$ and then left multiply by $A$, it is more efficient to calculate $A A^\dagger$ directly. For this, let $P \eqdef A A^\dagger$ and note that from  the identities $AA^\dagger A =A$ and $A^\top AA^\dagger = A^\top$ we have that $P$ satisfies
\begin{enumerate}
\item \label{it:projprop1}  $PA = A$
\item \label{it:projprop2}  $A^\top P = A^\top.$
\end{enumerate}
 We can design a sketch and project method based on either property. For instance, based on item~\ref{it:projprop1} we have the method
\begin{equation}\label{eq:projiter}
X_{k+1} \eqdef \arg \min \frac{1}{2}\norm{X-X_k}^2,\quad  \mbox{subject to} \quad PAS   = AS.
\end{equation}
The advantage of this approach, over calculating $A^\dagger$ separately, is a resulting faster method. Indeed, if we were to carry out the analysis of this method, following analogous steps to the convergence in Section~\ref{subsec:ATAXconv}, and together with a conveniently chosen probability distribution based on Lemma~\ref{lem:convprob}, the iterates~\eqref{eq:projiter} would converge according to
\begin{equation} \label{eq:EXprojA}
\E{\norm{X_{k+1}-P}^2} =\left(1- \frac{\lambda_{\min}^+ (\mathbb{S}^\top  A^\top A \mathbb{S})}{\Tr{\mathbb{S}^\top  A^\top A  \mathbb{S}}}\right) \E{\norm{X_k-P}^2}. \end{equation}
  Since the rate is proportional to a scaled condition number with fewer powers of $A$ as compared to our previous convergence results~\eqref{eq:SATAXExpnorm}, the method~\eqref{eq:projiter} is less sensitive to ill conditioning in the matrix $A$. 
  
Such a method would be useful in a solving linearly constrained optimization problems~\cite{GoulHribNoce01,Calamai1987} which often require projecting the gradient onto the range space of system matrix. In particular, in a iteration of a Newton-CG framework~\cite{Dembo1982,Gondzio2013}, one needs only inexact solutions to a quadratic optimization problem with linear constraints. A method based on~\eqref{eq:projiter} can be used to calculate a projection operator to within the precision required by the Newton-CG framework, and thus save on the computational effort of calculating the exact projection matrix.
  
\subsection{Pseudoinverse of full rank matrices}
\label{sec:fullrank}

In the special case when $A$ has full rank, there are two alternative sketch-and-project methods that are more effective than our generic method.  In particular, when $A$ has full row rank ($m \leq n$) then there exists $X$ such that $AX =I$, furthermore, $A A^\dagger = I$. In this case, we have that
\begin{equation}
A^\dagger = \arg \min \norm{X}_F^2,\quad  \mbox{subject to} \quad
AX =I.
\end{equation}  
Applying a sketching and projecting strategy to the above gives
\begin{equation}\label{eq:SAX}
X^{k+1}  = \arg \min \norm{X-X^k}_F^2,\quad  \mbox{subject to} \quad
S^\top AX =S^\top .
\end{equation}
This method~\eqref{eq:SAX} was presented in~\cite{Gower2016} as a method for inverting matrices. The analysis in~\cite{Gower2016} still holds in this situation by using the techniques we presented in Section~\ref{sec:conv}. Again, the resulting rate of convergence of the method defined by~\eqref{eq:SAX} is less sensitive to ill conditioning in the matrix $A$, as can be seen in Theorem~6.2 in \cite{Gower2016}.

Alternatively, when $A$ has full column rank, then $A^\dagger A =I,$ and one should apply a sketching and projecting method using the equation $XA=I.$

Consequently the methods \texttt{SATAX}~\eqref{eq:SATAXsol} and \texttt{SAXAS}~\eqref{eq:SAXASsol} are better suited for calculating the pseudoinverse of rank deficient matrices, which is the focus of our experiments in the next section.
\section{Numerical Experiments}
\label{sec:numerics}
We now perform several numerical experiments comparing  two variants of the \texttt{SATAX} and the~\texttt{SAXAS} methods
to the Newton-Schulz method
\begin{equation}
X^{k+1} = 2X^k + X^kAX^k,
\end{equation}
 as introduced by Ben-Israel and Cohen~\cite{Ben-Israel1966b,Ben-Israel1965} for calculating the pseudoinverse matrix. 
 The Newton-Schulz method is guaranteed to converge as long as $\norm{I-X_0A}_2 <1$. Consequently, we set $X_0= \tfrac{1}{2} \tfrac{A^\top }{\norm{A}_F^2}$ for the Newton-Schulz method to guarantee its convergence. Furthermore, the Newton-Schulz method enjoys quadratic local convergence~\cite{Ben-Israel1966b,Ben-Israel1965}, in contrast to the randomized methods which are globally linearly convergent.
 Thus in theory the Newton-Schulz should be more effective at calculating a highly accurate approximation to the pseudoinverse as compared to the randomized methods, as we confirm in the next experiments.

\setlength\fboxsep{0pt}
All the code for the experiments is written in the \raisebox{-\mydepth}{\includegraphics[scale=0.035]{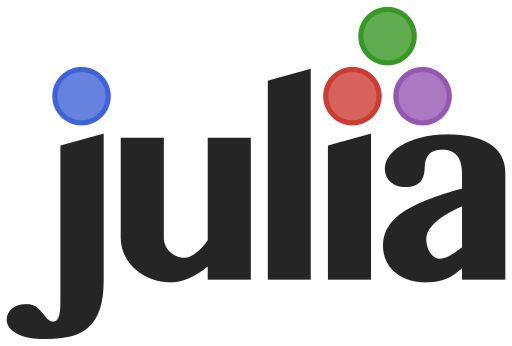}} programming language and can be downloaded from~\url{http://www.di.ens.fr/~rgower/} or~\url{https://github.com/gowerrobert/}.
 
In each figure presented below we plot the evolution of the residual $\norm{AXA-A}_F$ against time and flops of each method. 

\subsection{Nonsymmetric matrices}
 \label{subsec:num-nonsym}

In this section we compare the  \texttt{SATAX\_uni},  \texttt{SATAX\_ada} and Newton-Schulz methods presented earlier in Section~\ref{subsec:SATAXexe}.
 In setting the initial iterate $X_0$ for the \texttt{SATAX} methods, we know from Lemma~\ref{lem:invariant} and Theorem~\ref{theo:ATAX} that we need $X_0 = \alpha A^\top$ for some $\alpha \in \R$ to guarantee that the method converges. We choose $\alpha$ as
 \[ \alpha = \frac{\min\{n,m\}}{\norm{A}_F^2},\]
 which is an approximation to the solution of  
  \[\alpha^* = \arg \min \norm{A^\dagger - \alpha A^\top}_F^2,\]
 to which the exact solution is $\alpha^* = \left.\Rank{A} \right/ \norm{A}_F^2.$

To verify the performance of the methods, we test several rank deficient matrices from the UF sparse matrix collection\cite{Davis:2011}. In Figures~\ref{fig:lp_fit2d}, \ref{fig:lp_ken}, \ref{fig:NYPA-Maragal_6} and~\ref{fig:Meszaros-primagaz} we tested the three methods on the LPnetlib/lp\_fit2d, the LPnetlib/lp\_ken\_07, NYPA/Maragal\_6 and the Meszaros/primagaz problems, respectively. 
\begin{figure}\centering  
\includegraphics[width = 0.45\textwidth ]{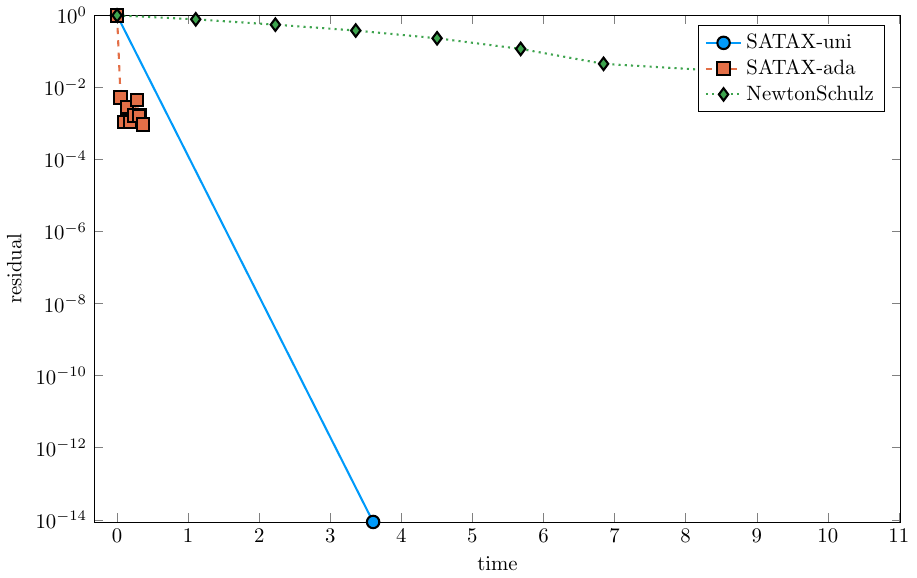} ~~~
\includegraphics[width = 0.45\textwidth ]{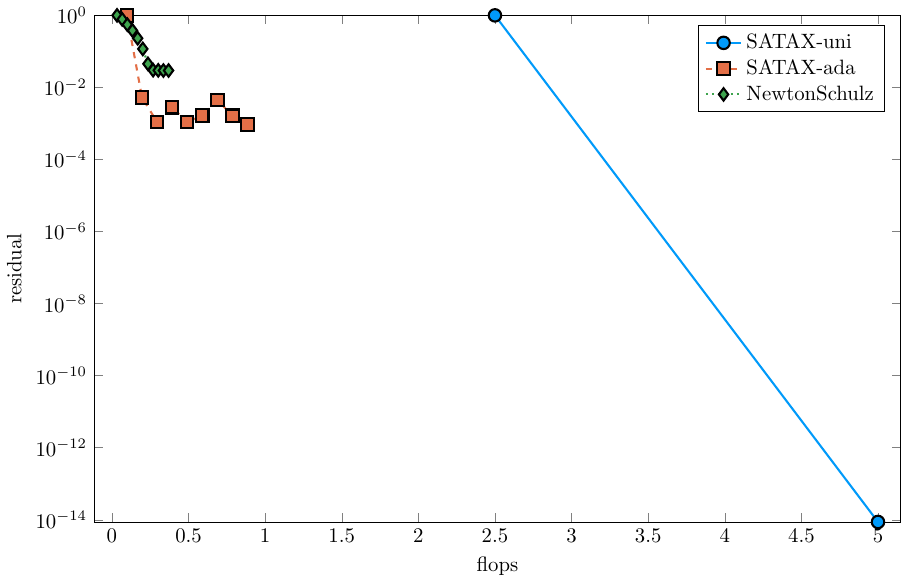}
\caption{LPnetlib/lp\_fit2d $(m;n) = (10,524 ; 25)$. }
\label{fig:lp_fit2d}
\end{figure}

\begin{figure}\centering  
\includegraphics[width = 0.45\textwidth ]{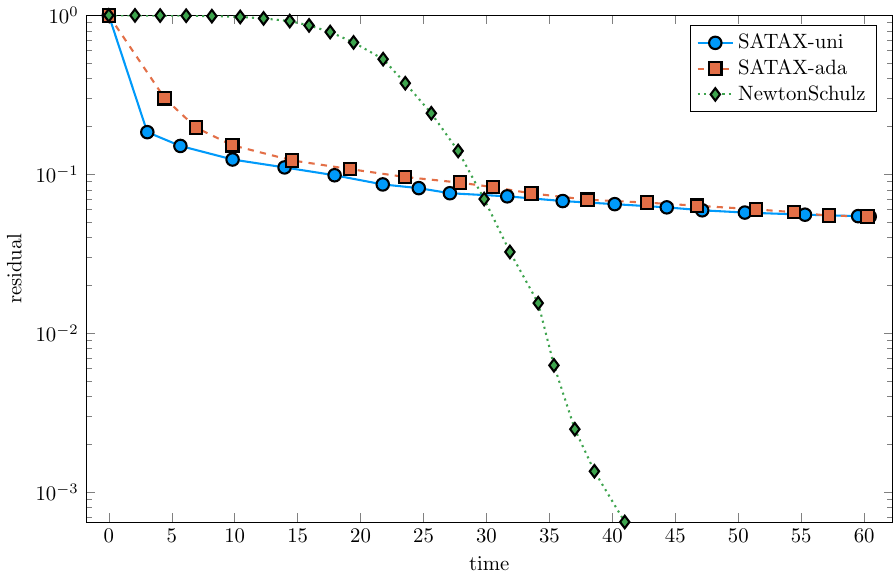}~~~
\includegraphics[width = 0.45\textwidth ]{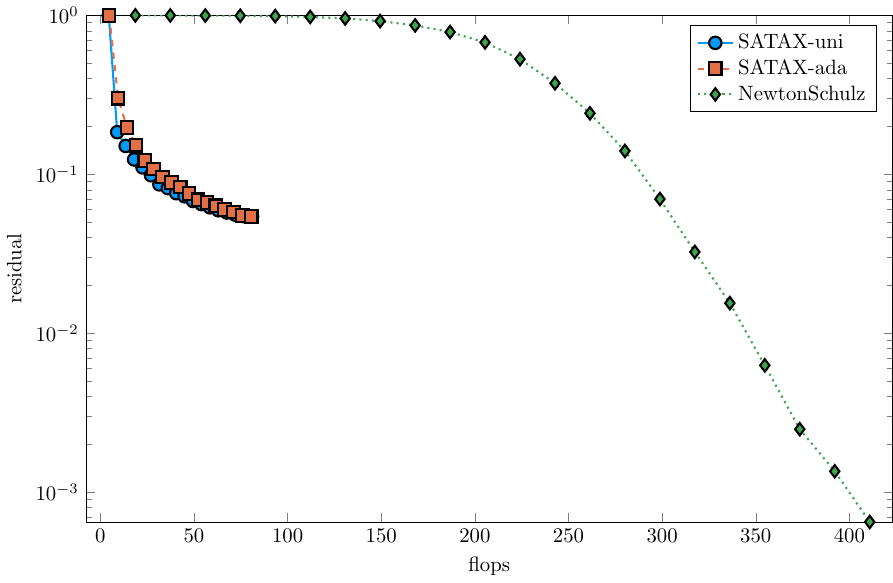}
\caption{LPnetlib/lp\_ken\_07 $(m;n) = (2,426 ; 3,602)$. }
\label{fig:lp_ken}
\end{figure}

\begin{figure}\centering  
\includegraphics[width = 0.45\textwidth ]{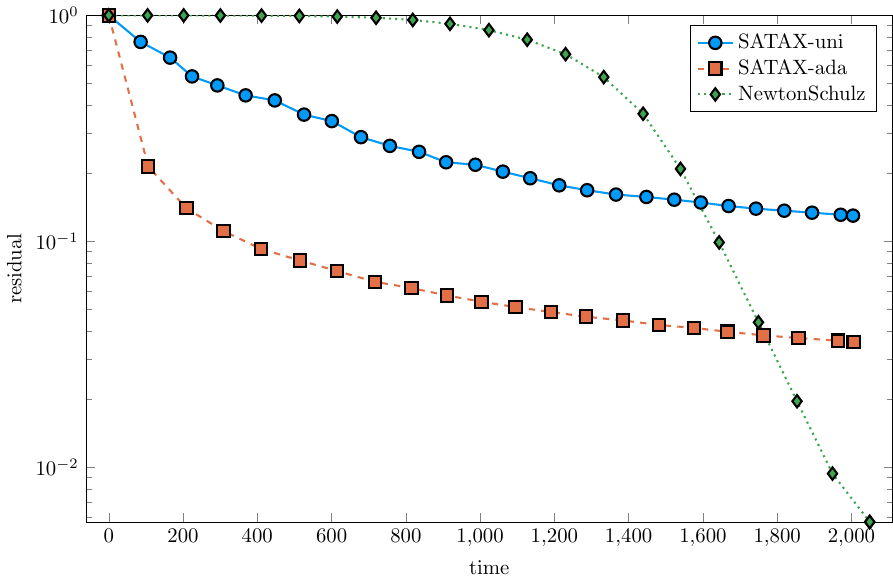}~~~
\includegraphics[width = 0.45\textwidth ]{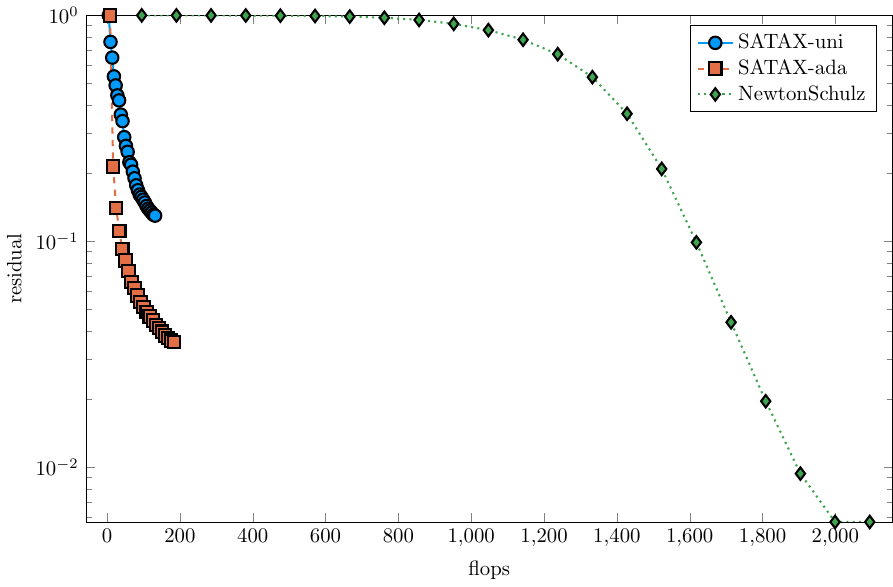}
\caption{NYPA/Maragal\_6 $(m;n) = (21,255	; 10,152	)$. }
\label{fig:NYPA-Maragal_6}
\end{figure}

\begin{figure}
\includegraphics[width = 0.45\textwidth ]{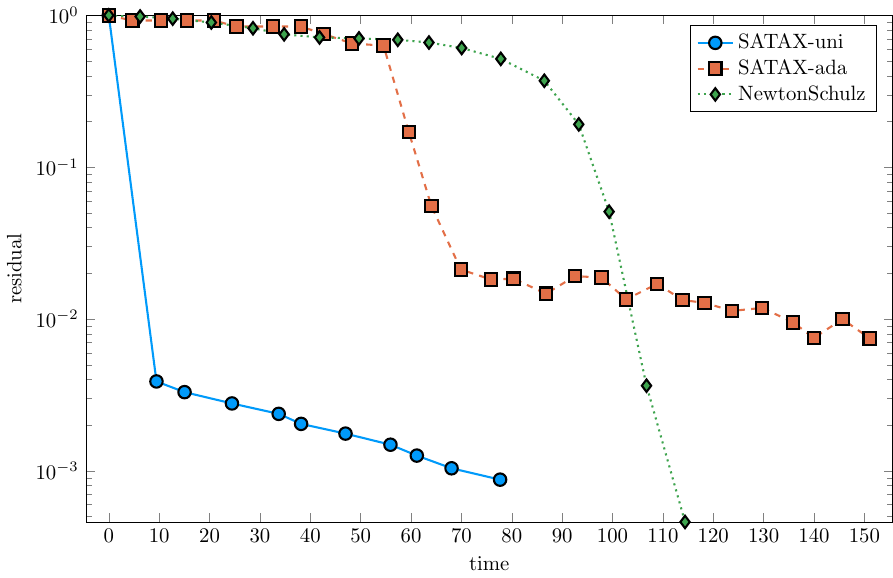}~~~
\includegraphics[width = 0.45\textwidth ]{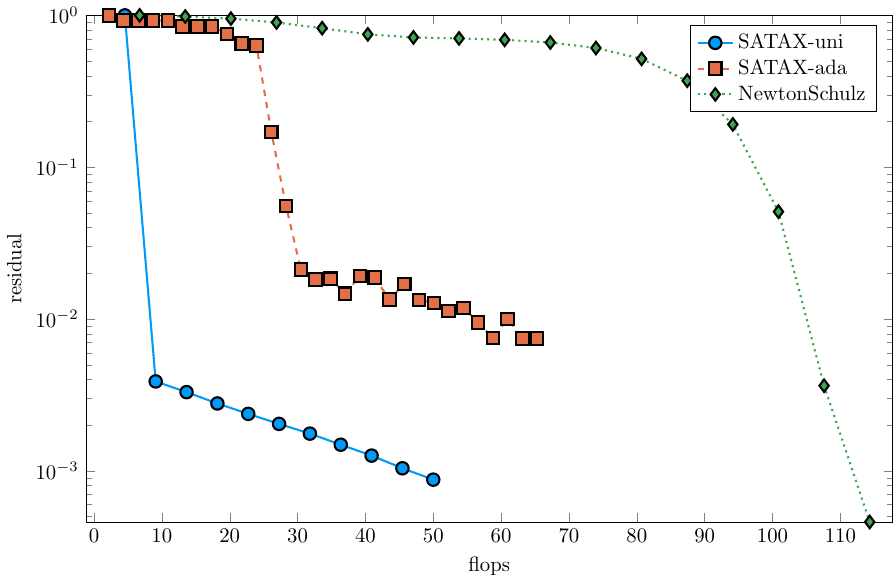}
\caption{Meszaros/primagaz $(m;n) = (1,554 ; 10,836)$ }
\label{fig:Meszaros-primagaz}
\end{figure}

From Figure~\ref{fig:lp_fit2d} we see that the \texttt{SATAX} methods are considerably faster at calculating the pseudoinverse on highly rectangular matrices ($n \ll m$ or $n \gg m$) as compared to the Newton-Schultz method. Indeed, by the time the Newton-Schultz method completes three iterations, the stochastic methods have already encountered a pseudoinverse within the desired accuracy. On the remaining problems in Figures~\ref{fig:lp_ken}, \ref{fig:NYPA-Maragal_6} and~\ref{fig:Meszaros-primagaz} the results are mixed, in that, the \texttt{SATAX} methods are very fast at encountering a rough approximation of the pseudoinverse with a residual between $10^{-1}$ and $10^{-3}$, but for reaching a lower residual the Newton-Schultz method proved to be the most efficient.

In calculating the approximate pseudoinverse of the the best rank $r=1000$ approximation to a random  $5000\times 2500$ Gaussian matrix the Newton-Schultz method outperforms the randomized methods in terms of time taken but is less efficient is terms of flops, see in Figure~\ref{fig:rand1}. We observed this same result holds for  Gaussian matrices with a range of different dimensions and different ranks. 
\begin{figure}
\includegraphics[width = 0.45\textwidth ]{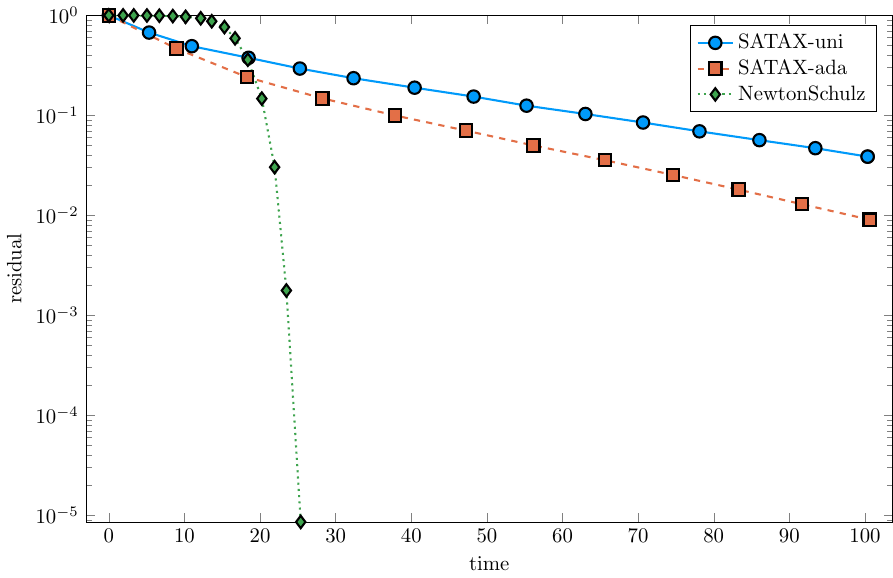}~~~
\includegraphics[width = 0.45\textwidth ]{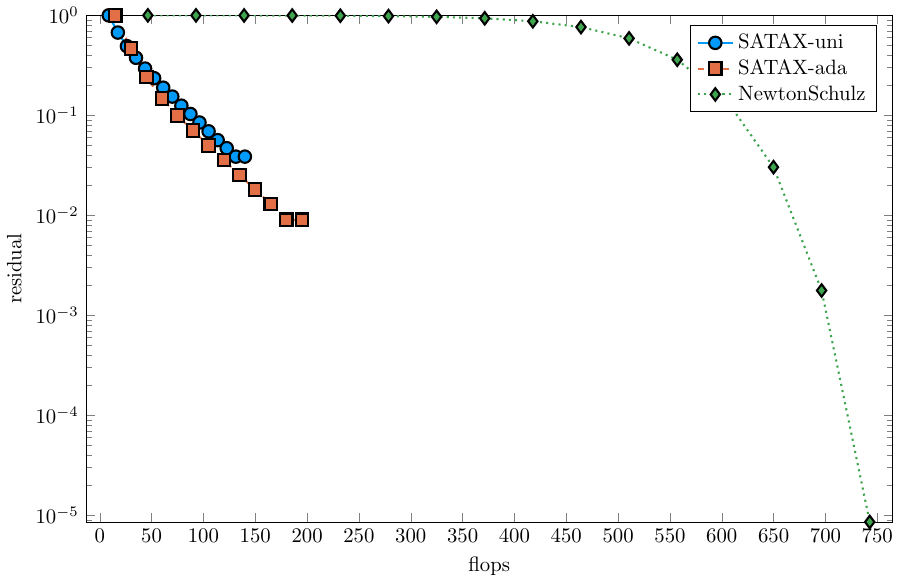}
\caption{The matrix $A$  is the best rank $1000$ approximation to a randomly generated  $5000\times 2500$ normal Gaussian matrix.}
\label{fig:rand1}
\end{figure}

The faster initial convergence of \texttt{SATAX} methods and the local quadratic convergence of the Newton-Schulz method can be combined to create an efficient method. To illustrate, we create a combined method named \texttt{NS-SATAX} where we use the \texttt{SATAX} method for the first few iterations before switching to the Newton-Schulz method, see Figure~\ref{fig:comb}. Through experiments we have identified that we should switch to the Newton-Schulz method after the \texttt{SATAX} method has performed one \emph{effective pass over the data}. In other words, we should switch methods after $t$ iterations such that $t$ times the cost of computing the sketched matrix $AS$ is equal to the cost of performing one full matrix-matrix product $A X$ where $X \in \R^{n \times m}$. Though this requires care, in particular, if $X_t$ is the last iteration of the \texttt{SATAX} method, then we need to ensure that $X_t$ satisfies the starting condition  $\norm{I-X_t A}_2 < 1$ of the Newton-Schulz method. For this we normalize the iterate $X_t$ according to $X_t \leftarrow \left.X_t \right/ \norm{X_t A}_F.$ This normalization is a heuristic and is not guaranteed to satisfy the Newton-Schulz starting condition. Despite this, it does work in practice as we can see in Figure~\ref{fig:comb} where the combined method~\texttt{NS-SATAX} outperforms the Newton-Schulz method during the entire execution.

\begin{figure}
\includegraphics[width = 0.45\textwidth ]{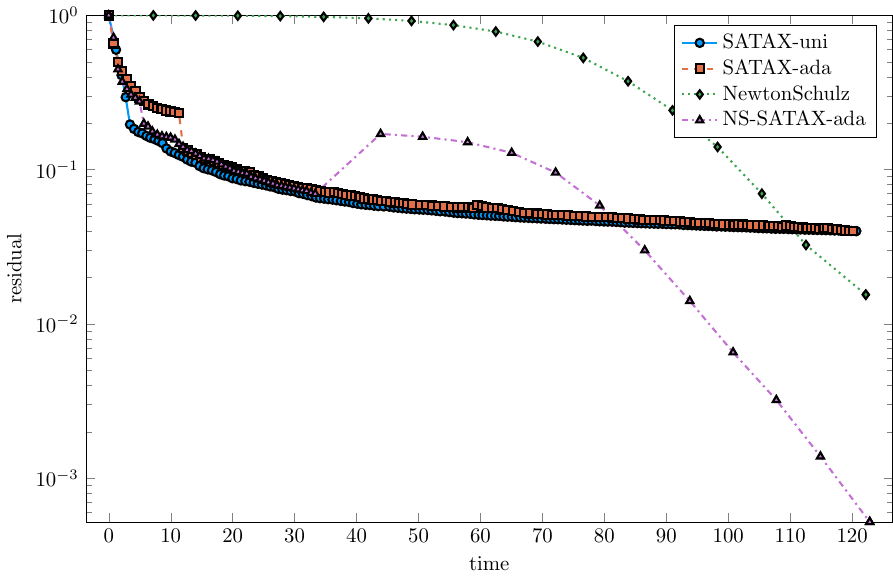}~~~
\includegraphics[width = 0.45\textwidth ]{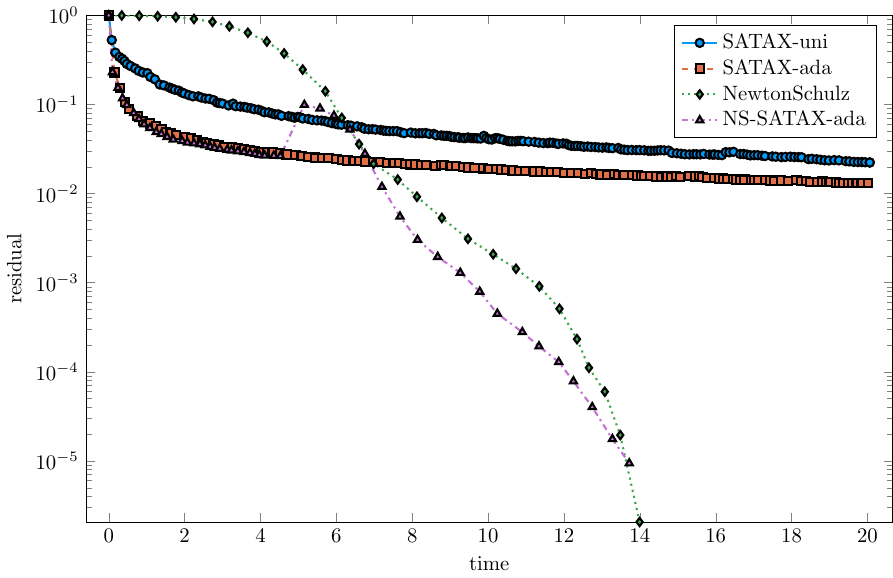}
\caption{Comparing the new combined method \texttt{NS-SATAX} to the \texttt{SATAX\_uni}, \texttt{SATAX\_ada} and the Newton-Schulz method on the LPnetlib/lp\_ken\_07 matrix (LEFT figure) and the NYPA/Maragal\_3 ( RIGHT figure).}
\label{fig:comb}
\end{figure}

\subsection{Symmetric matrices}

In this section we compare the  \texttt{SAXAS\_uni},  \texttt{SAXAS\_ada} and Newton-Schulz methods.  In setting the initial iterate $X_0$ for the \texttt{SAXAS} methods, we know from Lemma~\ref{lem:invariantAXA} and Theorem~\ref{theo:SAXAS} that we need $X_0 = \alpha A^2$ for some $\alpha \in \R$ to guarantee that the method converges. We choose $\alpha$ so that $\norm{X_0}_F^2 =1,$ that is $\alpha = \left. 1\right/ \norm{A}_F^2.$

To test the symmetric methods we used the Hessian matrix $A^\top A$ of the linear regression problem 
 \begin{equation}\label{eq:ridgeMatrix}
\min_{x\in \R^n} \frac{1}{2}\norm{Ax-b}_2^2
\end{equation}
using data from LIBSVM~\cite{Chang2011},  see Figure~\ref{fig:a9a},~\ref{fig:mushrooms},~\ref{fig:gisette_scale} and~\ref{fig:rcv1_train.binary}.
These experiments show that the two variants of the \texttt{SAXAS} method are much more efficient at calculating an approximate pseudoinverse as compared to the Newton-Schulz method, even for reaching a relative residual with a high precision  of around $10^{-6}$.
The only exception being the \texttt{rcv1\_train.binary} problem in Figure~\ref{fig:rcv1_train.binary}, where the~\texttt{SAXAS\_uni} and  \texttt{SAXAS\_ada} methods make very good progress in the first few iterations, but then struggle to bring the residual much below $10^{-2}.$ Again looking at Figure~\ref{fig:rcv1_train.binary}, the trend appears that the Newton-Schulz method will reach a lower precision than the the~\texttt{SAXAS\_uni} and  \texttt{SAXAS\_ada} after approximately $4000$ seconds, though we were not prepared to wait so long. We leave it as an observation that we could again get the best of both worlds by combining an initial execution of the \texttt{SAXAS} methods and later switching to the Newton-Schulz method as was done with the \texttt{SATAX} and Newton-Schulz method in the previous section.

\begin{figure}
\includegraphics[width = 0.45\textwidth ]{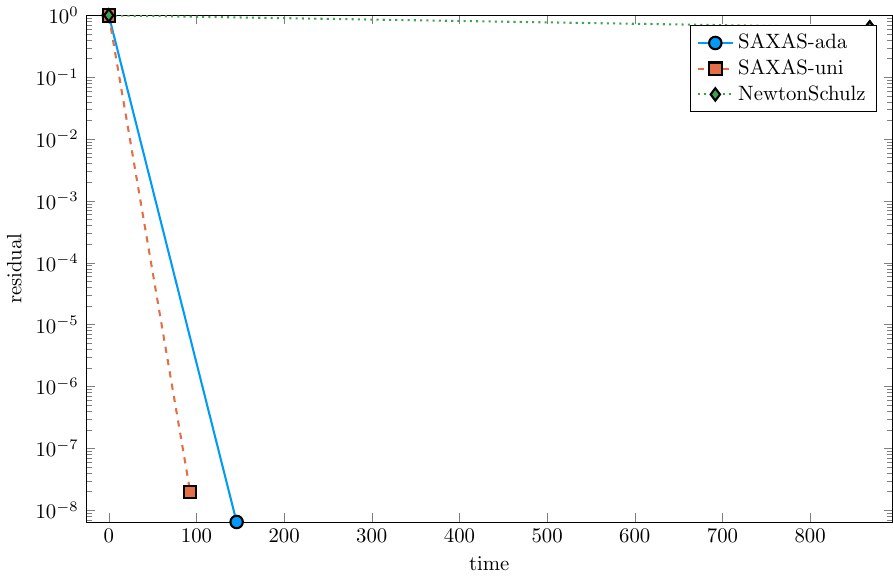}~~~
\includegraphics[width = 0.45\textwidth ]{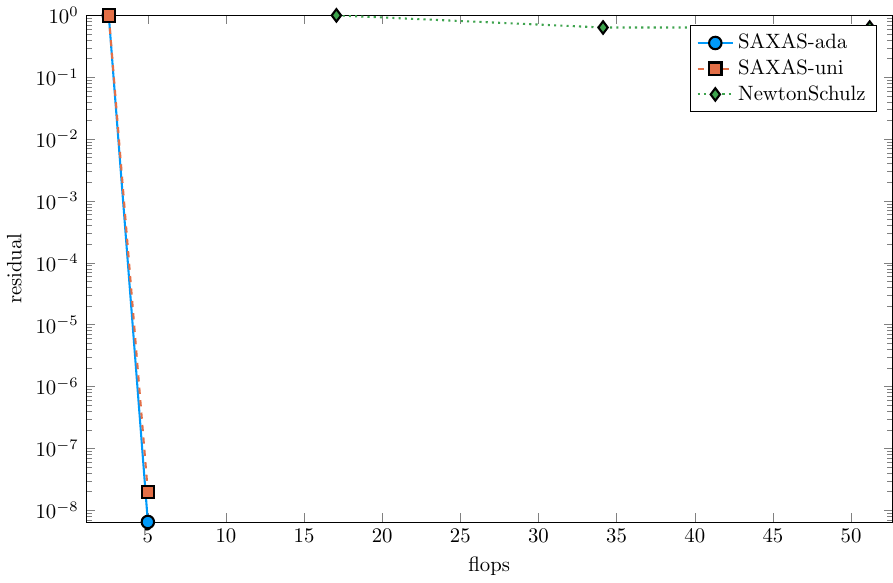}
\caption{a9a: $(m;n) = (123 ; 32,561)$.}
\label{fig:a9a}
\end{figure}

\begin{figure}
\includegraphics[width = 0.45\textwidth ]{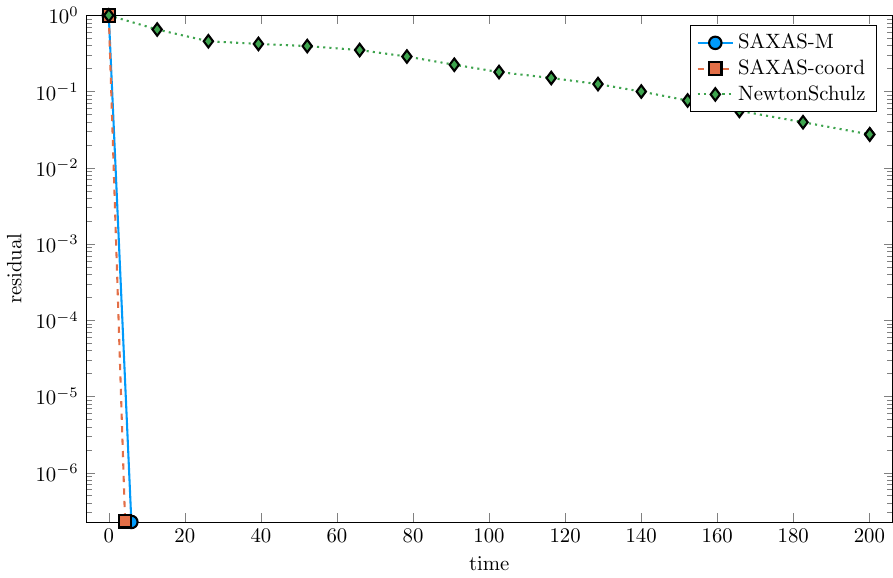}~~~
\includegraphics[width = 0.45\textwidth ]{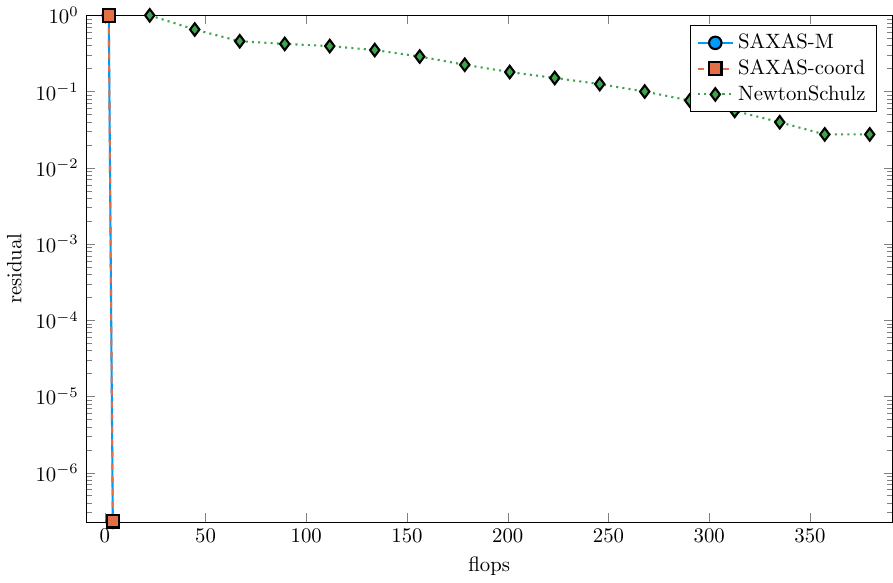}
\caption{mushrooms: $(m;n) = (8,124 ; 112)$ }
\label{fig:mushrooms}
\end{figure}

\begin{figure}
\includegraphics[width = 0.45\textwidth ]{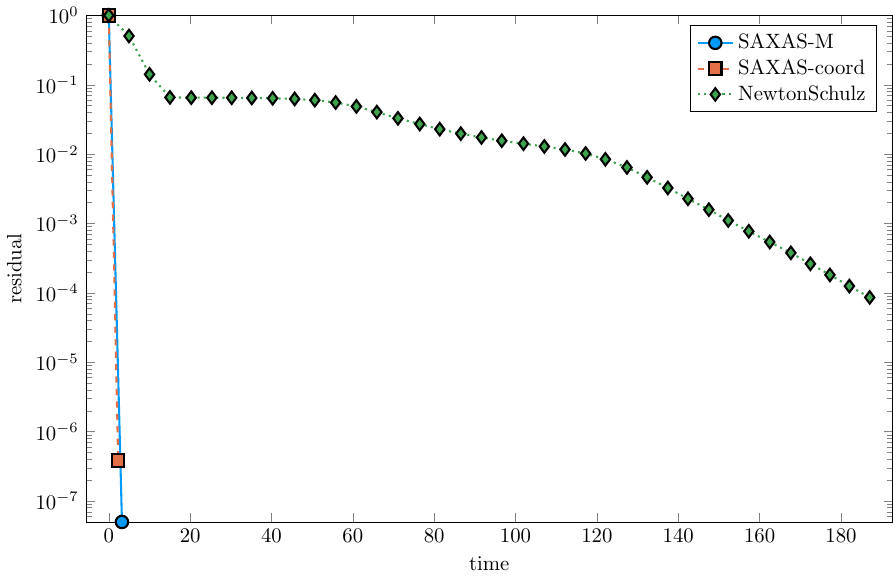}~~~
\includegraphics[width = 0.45\textwidth ]{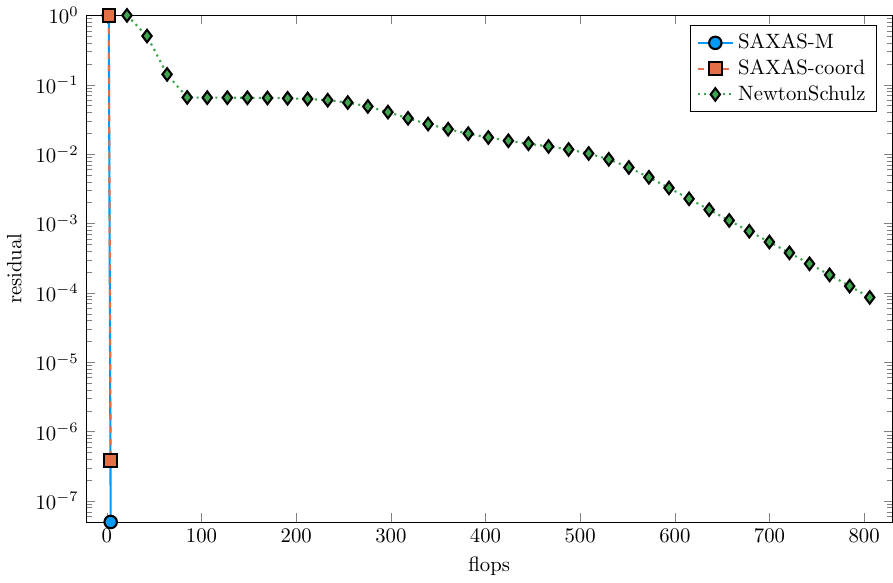}
\caption{gisette\_scale: $(m;n)=(6000; 5000)$ }
\label{fig:gisette_scale}
\end{figure}

\begin{figure}
\includegraphics[width = 0.45\textwidth ]{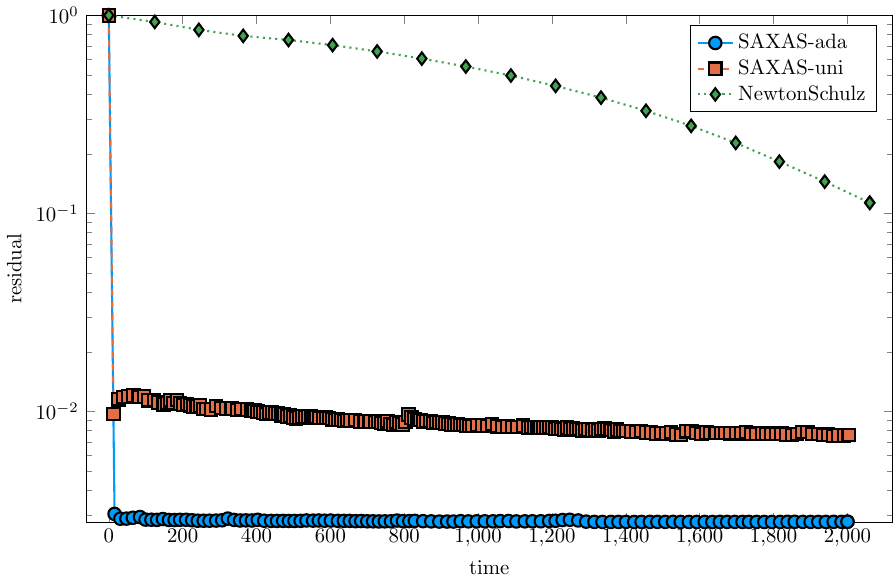}~~~
\includegraphics[width = 0.45\textwidth ]{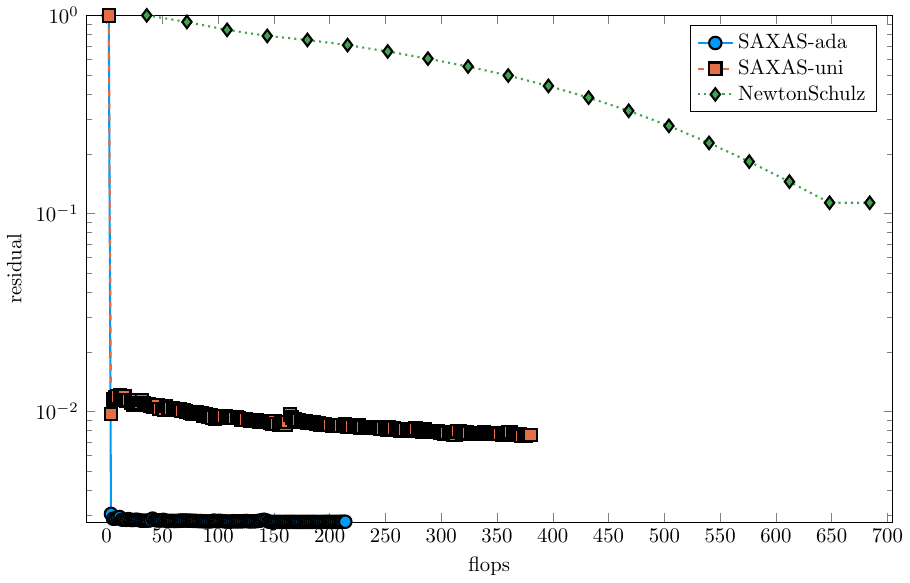}
\caption{rcv1\_train.binary: $(m;n)=(20,242; 47,236)$
 }
\label{fig:rcv1_train.binary}
\end{figure}

Again we found that the Newton-Schultz method was more efficient in calculating pseudoinverse of randomly generated Gaussian matrices $A$, where $A$ is the best rank $1000$ approximation to a matrix $G+G^\top$, where $G$ is a $5000 \times 5000$ random Gaussian matrix; see Figure~\ref{fig:rand}. 

\begin{figure}
\includegraphics[width = 0.45\textwidth ]{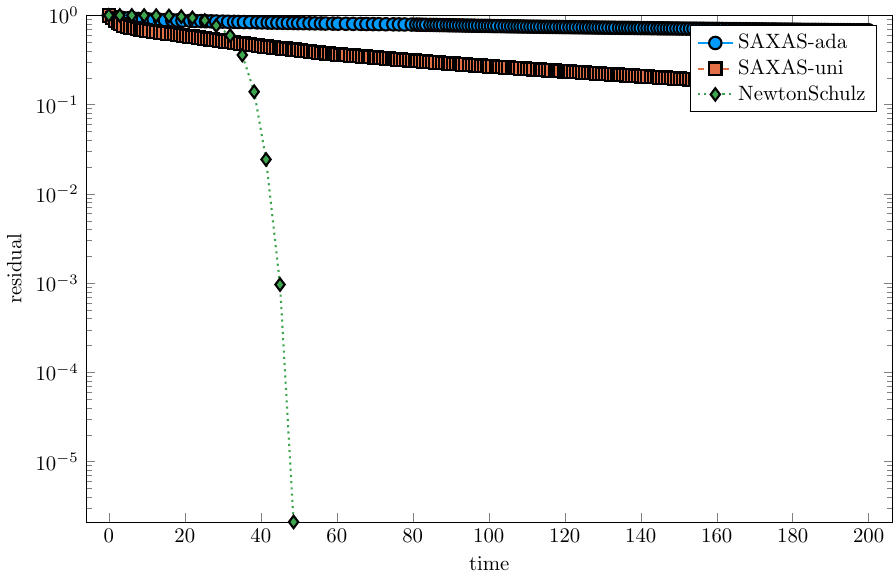}
\includegraphics[width = 0.45\textwidth ]{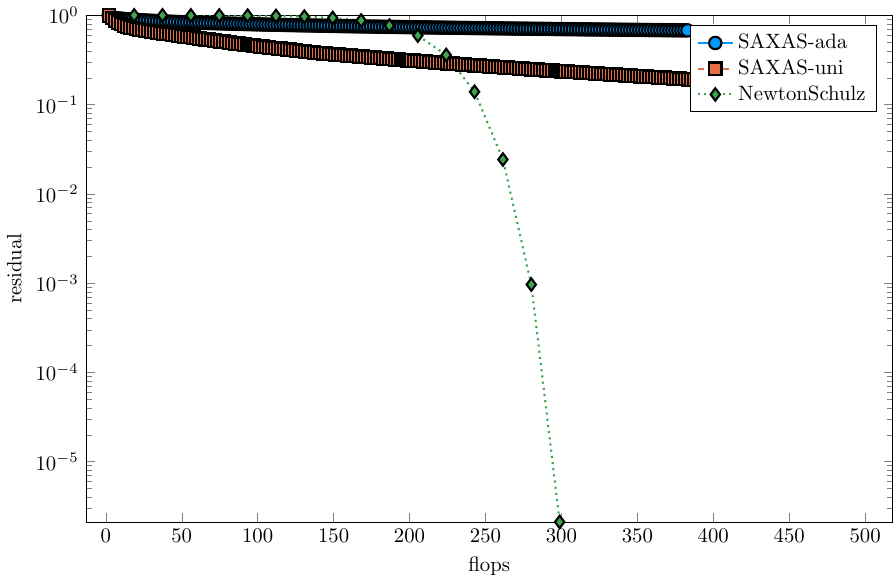}
\caption{The matrix $A$ is the best rank $10^3$ approximation to the matrix $G+G^\top$ where $G$ is a $5000 \times 5000$ random Gaussian matrix.}
\label{fig:rand}
\end{figure}

\section{Conclusions and Future Work}
We presented a new family of randomized methods for iteratively computing the pseudoinverse which are proven to converge linearly to the pseudoinverse matrix and, moreover, numeric experiments show that the new randomized methods are vastly superior at quickly obtaining an approximate pseudoinverse matrix. In such cases where an approximation of the pseudoinverse of a nonsymmetric matrix with a relative residual below $10^{-3}$ is required  then our experiments show that the Newton Schultz method is more effective as compared to our randomized methods. These observations inspired a combined method which we illustrated in~Figure~\ref{fig:comb} which has better overall performance than the Newton-Schulz method. Furthermore, we present new symmetric sketches used to design the \texttt{SAXAS} method.  
For future work, we have indicated how to design randomized methods for calculating approximate range space projections and pseudoinverse of full rank matrices.



{ 
\bibliographystyle{plain}
\bibliography{pseudo_inverse}
}

\section{Appendix}
Here we present and prove several fundamental linear algebra lemmas that are required to develop the main theorems in the paper.

\subsection{Key linear algebra lemmas}

\begin{lemma}\label{lem:09709s}For any matrix $W$ and symmetric positive semidefinite matrix $G$ such that
\begin{equation} \label{eq:Gnullassm}
 \Null{G} \subset \Null{W^\top },
 \end{equation} we have that
\begin{equation}\label{eq:8ys98hs}\Null{W} = \Null{W^\top G W}\end{equation}
and
\begin{equation}\label{eq:8ys98h986ss}\Range{W^\top } = \Range{W^\top G W}.\end{equation}
\end{lemma}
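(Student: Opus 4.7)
The plan is to prove \eqref{eq:8ys98hs} first and then deduce \eqref{eq:8ys98h986ss} from it by orthogonality, exploiting the symmetry of $W^\top G W$.

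For \eqref{eq:8ys98hs}, the inclusion $\Null{W} \subset \Null{W^\top G W}$ is immediate: if $Wx = 0$, then $W^\top G W x = 0$. For the reverse inclusion, I would take $x \in \Null{W^\top G W}$ and pass to the quadratic form: $0 = x^\top W^\top G W x = \norm{G^{1/2}Wx}_2^2$, where $G^{1/2}$ exists because $G$ is symmetric positive semidefinite. This yields $G^{1/2} W x = 0$, hence $G W x = G^{1/2}(G^{1/2} W x) = 0$, so $Wx \in \Null{G}$. The main (and only nontrivial) step is then to argue that $Wx$ must actually be zero. For this, I would rewrite the hypothesis \eqref{eq:Gnullassm} in dual form: taking orthogonal complements and using symmetry of $G$ gives $\Range{W} = \Null{W^\top}^\perp \subset \Null{G}^\perp = \Range{G}$. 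Therefore $Wx \in \Range{G} \cap \Null{G}$, and since $G$ is symmetric this intersection is trivial. Hence $Wx = 0$, completing the reverse inclusion.

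For \eqref{eq:8ys98h986ss}, the inclusion $\Range{W^\top G W} \subset \Range{W^\top}$ is obvious by factoring out $W^\top$. The reverse inclusion follows from \eqref{eq:8ys98hs} by passing to orthogonal complements: since $W^\top G W$ is symmetric, $\Range{W^\top G W} = \Null{W^\top G W}^\perp$, and in general $\Range{W^\top} = \Null{W}^\perp$. Equation \eqref{eq:8ys98hs} then gives $\Range{W^\top G W} = \Range{W^\top}$ at once.

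The only delicate point is the use of the hypothesis $\Null{G} \subset \Null{W^\top}$; without it, the step from $Wx \in \Null{G}$ to $Wx = 0$ would fail. The symmetric-PSD property of $G$ is used twice: to produce the square root $G^{1/2}$ (so that $W^\top G W x = 0 \Rightarrow GWx = 0$), and to ensure the direct sum decomposition $\R^m = \Range{G} \oplus \Null{G}$ needed to conclude $Wx = 0$. Both uses are standard, and I do not expect any real obstacle beyond assembling them in the right order.
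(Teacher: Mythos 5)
Your proof is correct and follows essentially the same route as the paper: pass to the quadratic form to get $G^{1/2}Wx=0$, hence $Wx\in\Null{G}$, and then use the hypothesis $\Null{G}\subset\Null{W^\top}$ together with orthogonality to force $Wx=0$, with \eqref{eq:8ys98h986ss} obtained by taking orthogonal complements. The only cosmetic difference is that you dualize the hypothesis to $\Range{W}\subset\Range{G}$ and intersect with $\Null{G}$, whereas the paper keeps it primal and intersects $\Null{W^\top}$ with $\Range{W}$ — the same orthogonality argument seen from the other side.
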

\begin{proof} 
In order to establish \eqref{eq:8ys98hs}, it suffices to show the inclusion $\Null{W} \supseteq \Null{W^\top G W}$ since the reverse inclusion trivially holds. Letting $s\in \Null{W^\top G W}$, we see that $\|G^{1/2}Ws\|^2=0$, which implies $G^{1/2}Ws=0$.
Consequently 
\[Ws \in \Null{G^{1/2}} = \Null{G} \overset{\eqref{eq:Gnullassm} }{\subset}  \Null{W^\top}.\] Thus $Ws \in \Null{W^\top} \cap \Range{W}$ which are orthogonal complements which shows that $Ws = 0.$

Finally, \eqref{eq:8ys98h986ss} follows from \eqref{eq:8ys98hs} by taking orthogonal complements. Indeed, $\Range{W^\top}$ is the orthogonal complement of $\Null{W}$ and $\Range{W^\top G W}$ is the orthogonal complement of $\Null{W^\top G W}$. \hfill\qed
\end{proof}

The following two lemmas are of key importance throughout the paper.
\begin{lemma} \label{lem:MMT}
For any matrix $M \in \R^{m \times n}$ and any matrix $R \in \R^{n \times d}$ such that $\Range{R} \subset \Range{M^\top}$ we have that
\begin{equation} \label{eq:MMT}
 \dotprod{ M^\top M R , R} \geq \lambda_{\min}^+(M^\top M) \dotprod{R,R},\end{equation}
\end{lemma}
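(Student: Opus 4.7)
My plan is to reduce the matrix inequality to a sum of vector inequalities, one per column of $R$, each of which follows directly from the variational characterization of $\lambda_{\min}^+$ given in~\eqref{eq:lambdamindef}.

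First, I would set $G = M^\top M$, which is symmetric positive semidefinite. The hypothesis $\Range{R} \subset \Range{M^\top}$ means that every column of $R$ lies in $\Range{M^\top}$. I would then invoke Lemma~\ref{lem:09709s} with $W = M$ and $G = I$ (so the null-space hypothesis $\Null{I} = \{0\} \subset \Null{M^\top}$ holds trivially) to conclude $\Range{M^\top} = \Range{M^\top M} = \Range{G}$. Since $G$ is symmetric, $\Range{G}$ is the orthogonal complement of $\Null{G}$, so each column of $R$ lies in $\Null{G}^\perp$.

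Next, writing $R = [r_1, \ldots, r_d]$ column-wise, I would expand the Frobenius inner products as sums:
\begin{equation*}
\dotprod{GR, R} \;=\; \Tr{R^\top G R} \;=\; \sum_{j=1}^{d} r_j^\top G r_j, \qquad \dotprod{R,R} \;=\; \sum_{j=1}^{d} \|r_j\|_2^2.
\end{equation*}
For each column $r_j \in \Null{G}^\perp$, the definition~\eqref{eq:lambdamindef} applied to the normalized vector $r_j/\|r_j\|_2$ (or trivially if $r_j = 0$) yields
\begin{equation*}
r_j^\top G r_j \;\geq\; \lambda_{\min}^+(G)\, \|r_j\|_2^2.
\end{equation*}
Summing over $j = 1, \ldots, d$ then gives $\dotprod{GR,R} \geq \lambda_{\min}^+(G)\,\dotprod{R,R}$, which is exactly~\eqref{eq:MMT}.

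There is no real obstacle here; the only subtlety is verifying that the column-wise containment in $\Null{G}^\perp$ follows cleanly from the hypothesis on $\Range{R}$, which is what the appeal to Lemma~\ref{lem:09709s} accomplishes. Everything else is a direct expansion of traces into sums and a per-column application of the Rayleigh-quotient definition of $\lambda_{\min}^+$.
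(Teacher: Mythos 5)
Your proof is correct and follows essentially the same route as the paper: expand the Frobenius inner product column-wise and apply the per-vector inequality $v^\top M^\top M v \geq \lambda_{\min}^+(M^\top M)\|v\|_2^2$ for $v \in \Range{M^\top}$. The only difference is cosmetic: you justify the per-column step via Lemma~\ref{lem:09709s} and the variational characterization~\eqref{eq:lambdamindef}, whereas the paper simply appeals to diagonalizing $M^\top M$; your version is if anything slightly more explicit about why columns of $R$ lie in $\Null{M^\top M}^\perp$.
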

\begin{proof}
Since 
\[\dotprod{ M^\top M R , R} = \Tr{R^\top M^\top M R} = \sum_{i=1}^d \dotprod{M^\top M R_{:i},R_{:i}},\]
the inequality~\eqref{eq:MMT} follows from the known inequality
\[ \dotprod{M^\top M v, v} \geq \lambda_{\min}^+(M^\top M) \dotprod{v,v},\]
where $v \in \Range{M^\top},$ which can be proved be diagonalizing $M^\top M.$
\end{proof}

\begin{lemma}\label{lem:WGWtight}
Let $0\neq W \in  \R^{m\times n}$ and
$G \in \R^{m\times m}$ be symmetric positive semi-definite with $\Null{G} \subset \Null{W^\top}$. Then the matrix $W^\top G W$ has a positive eigenvalue, and the following inequality holds:
\begin{equation}\label{eq:WGWtight}
 \dotprod{ W^\top G W R, R} \geq \lambda_{\min}^+(W^\top G W) \dotprod{R,R},
 \end{equation}
 where $R$ is a matrix with $n$ rows and $\Range{R} \subset \Range{W^\top}.$
\end{lemma}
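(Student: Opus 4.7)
\textbf{Proof proposal for Lemma~\ref{lem:WGWtight}.}

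The plan is to reduce the statement to the already-proved Lemma~\ref{lem:MMT} by passing through a square root of $G$, and to use Lemma~\ref{lem:09709s} to handle the range/null-space bookkeeping as well as the claim that $W^\top G W$ has a positive eigenvalue.

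First I would observe that since $G$ is symmetric positive semidefinite, it admits a symmetric positive semidefinite square root $G^{1/2}$, with $\Null{G^{1/2}} = \Null{G}$ and $(G^{1/2})^\top G^{1/2} = G$. Setting $M \eqdef G^{1/2} W$, a direct computation gives $M^\top M = W^\top G W$, so that in particular $\lambda_{\min}^+(M^\top M) = \lambda_{\min}^+(W^\top G W)$.

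Next, to establish that $W^\top G W$ has a positive eigenvalue, I would invoke Lemma~\ref{lem:09709s}, whose hypothesis $\Null{G} \subset \Null{W^\top}$ is precisely the assumption of the present lemma. That lemma gives $\Range{W^\top G W} = \Range{W^\top}$. Because $W \neq 0$, we have $\Range{W^\top} \neq \{0\}$, so $W^\top G W$ is a nonzero symmetric positive semidefinite matrix and therefore possesses at least one strictly positive eigenvalue.

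For the main inequality, the key step is to check that the hypothesis $\Range{R} \subset \Range{W^\top}$ implies $\Range{R} \subset \Range{M^\top}$, which is the condition needed to apply Lemma~\ref{lem:MMT} to $M = G^{1/2}W$. This is where the range identity from Lemma~\ref{lem:09709s} is used: any column of $R$ lies in $\Range{W^\top} = \Range{W^\top G W}$, so it can be written as $W^\top G W v = W^\top G^{1/2}(G^{1/2} W v) = M^\top (G^{1/2} W v)$ for some $v$, and hence lies in $\Range{M^\top}$. With this verified, Lemma~\ref{lem:MMT} applied to $M$ and $R$ yields
\[ \dotprod{W^\top G W R, R} = \dotprod{M^\top M R, R} \geq \lambda_{\min}^+(M^\top M) \dotprod{R,R} = \lambda_{\min}^+(W^\top G W) \dotprod{R,R}, \]
which is exactly \eqref{eq:WGWtight}.

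The only mildly delicate point I anticipate is the range inclusion $\Range{R} \subset \Range{M^\top}$; everything else is bookkeeping with square roots and an appeal to the existing lemmas. The condition $\Null{G} \subset \Null{W^\top}$ is used exactly once, namely to invoke Lemma~\ref{lem:09709s}, and without it the identity $\Range{W^\top G W} = \Range{W^\top}$ can fail, which is why the hypothesis is needed.
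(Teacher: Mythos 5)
Your proposal is correct and follows essentially the same route as the paper: both reduce to Lemma~\ref{lem:MMT} with $M = G^{1/2}W$ and use Lemma~\ref{lem:09709s} to identify $\Range{W^\top}$ with $\Range{W^\top G W}$ (equivalently $\Range{M^\top}$), your version merely spelling out the range bookkeeping and the positive-eigenvalue claim a bit more explicitly than the paper does.
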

\begin{proof}
By Lemma~\ref{lem:MMT} with $M = G^{1/2}W$ we have that~\eqref{eq:WGWtight} holds for $\Range{R} \subset \Range{W^\top G^{1/2}}.$ The proof now follows by observing 
\[ \Range{W^\top G^{1/2}} \overset{\text{Lemma}~\ref{lem:09709s} }{=} \Range{W^\top G W} \overset{\text{Lemma}~\ref{lem:09709s} }{=} \Range{W^\top}.\hfill\qed\]
\end{proof}


\subsection{Smallest nonzero eigenvalue of the product of two matrices}
\begin{lemma} \label{lem:prodsmallest}
Let $A, B \in \R^{n\times n}$ be symmetric positive semidefinite matrices. If 
\begin{equation}\label{eq:NullANullB}
\Null{A} \subset \Null{B}
\end{equation}
then
\begin{equation}\label{eq:prodsmallest}
\lambda_{\min}^+(AB) \geq \lambda_{\min}^+(A) \lambda_{\min}^+(B).
\end{equation} 
\end{lemma}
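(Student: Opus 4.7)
The plan is to reduce the non-symmetric product $AB$ to a symmetric matrix whose eigenvalues coincide with those of $AB$ on the nonzero spectrum, and then apply the variational characterization \eqref{eq:lambdamindef} of $\lambda_{\min}^+$ twice in succession, once for $A$ and once for $B$. The hypothesis $\Null{A}\subset \Null{B}$ will be converted into the equivalent inclusion $\Range{B}\subset \Range{A}$ (valid since $A$ is symmetric, so $\Range{A}=\Null{A}^\perp$) which is precisely what glues the two applications together.

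First I would observe that the nonzero eigenvalues of $PQ$ and $QP$ coincide for any two square matrices, hence
\[
\lambda_{\min}^+(AB) \;=\; \lambda_{\min}^+\!\bigl(B^{1/2} A B^{1/2}\bigr),
\]
where $B^{1/2}$ is the symmetric PSD square root of $B$. Since the matrix on the right is symmetric PSD, I can use the variational formula \eqref{eq:lambdamindef}: pick a unit eigenvector $y$ corresponding to $\lambda_{\min}^+(B^{1/2}AB^{1/2})$, so that $y\in\Range{B^{1/2}AB^{1/2}}$ and $\|y\|_2=1$.

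Next I would track two range inclusions. On one hand, $\Range{B^{1/2}AB^{1/2}}\subset\Range{B^{1/2}}=\Range{B}$, hence $y\in\Range{B}=\Null{B}^\perp$, so the variational characterization \eqref{eq:lambdamindef} applied to $B$ gives
\[
\langle B\,y,\,y\rangle \;\geq\; \lambda_{\min}^+(B)\,\|y\|_2^2.
\]
On the other hand, $B^{1/2}y\in\Range{B^{1/2}}=\Range{B}\subset\Range{A}$, where the last inclusion is obtained from $\Null{A}\subset\Null{B}$ by taking orthogonal complements and invoking symmetry of $A,B$. Therefore \eqref{eq:lambdamindef} applied to $A$ yields
\[
\langle A B^{1/2}y,\,B^{1/2}y\rangle \;\geq\; \lambda_{\min}^+(A)\,\|B^{1/2}y\|_2^2 \;=\; \lambda_{\min}^+(A)\,\langle B\,y,\,y\rangle .
\]

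Finally, chaining the two inequalities together gives
\[
\lambda_{\min}^+\!\bigl(B^{1/2}AB^{1/2}\bigr) \;=\; \langle B^{1/2}AB^{1/2}y,\,y\rangle \;\geq\; \lambda_{\min}^+(A)\,\lambda_{\min}^+(B),
\]
which combined with the first displayed equation yields \eqref{eq:prodsmallest}. I expect the main subtlety to be the careful bookkeeping of ranges and null spaces so that each invocation of \eqref{eq:lambdamindef} is legitimate; in particular, the conversion of the hypothesis $\Null{A}\subset\Null{B}$ into $\Range{B}\subset\Range{A}$ is the single place where the assumption is used, and it is essential for the second inequality to go through. Everything else is a routine application of the fact that $PQ$ and $QP$ share nonzero spectrum together with the Rayleigh quotient on the orthogonal complement of the kernel.
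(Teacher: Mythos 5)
Your proof is correct, and it takes a genuinely different route from the paper. The paper works directly with the non-symmetric product $AB$: it writes $\lambda_{\min}^+(AB)$ as $\min_{v\in \Null{AB}^\perp}\norm{ABv}/\norm{v}$, splits the quotient as $\tfrac{\norm{ABv}}{\norm{Bv}}\cdot\tfrac{\norm{Bv}}{\norm{v}}$, and bounds the two factors separately by $\lambda_{\min}^+(A)$ and $\lambda_{\min}^+(B)$ using the null-space and range inclusions $\Null{AB}^\perp\subset\Null{B}^\perp\subset\Null{A}^\perp$ and $\Null{B}^\perp=\Range{B}=\Range{BB}$. You instead symmetrize first, replacing $AB$ by $B^{1/2}AB^{1/2}$ via the fact that $PQ$ and $QP$ share nonzero spectrum, and then apply the Rayleigh-quotient characterization \eqref{eq:lambdamindef} twice to an eigenvector attaining $\lambda_{\min}^+(B^{1/2}AB^{1/2})$, with the hypothesis entering only through $\Range{B}\subset\Range{A}$. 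Your bookkeeping is sound: $y\in\Range{B^{1/2}AB^{1/2}}\subset\Range{B}$ legitimizes the bound $\dotprod{By,y}\geq\lambda_{\min}^+(B)$, and $B^{1/2}y\in\Range{B}\subset\Range{A}$ legitimizes the bound involving $A$, after which the chain closes. A noteworthy advantage of your route is that it sidesteps the most delicate step of the paper's argument, namely identifying $\lambda_{\min}^+(AB)$ with the minimum of $\norm{ABv}/\norm{v}$ over $\Null{AB}^\perp$ — for a non-symmetric product this quantity is a smallest nonzero \emph{singular} value rather than an eigenvalue, so your symmetrization makes the eigenvalue bookkeeping cleaner; the price is introducing $B^{1/2}$ and the $PQ$/$QP$ spectrum fact, which the paper avoids. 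Like the paper, you implicitly assume the nondegenerate case ($B\neq 0$, so that $AB$ has a nonzero eigenvalue and $\lambda_{\min}^+(AB)$ is well defined), which is harmless.
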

\begin{proof}
Using the variational formulation we have that
\begin{eqnarray}
\lambda_{\min}^+(AB) &= & \min_{v \in \Null{AB}^{\perp} } \frac{\norm{ABv}}{\norm{v}} \nonumber \\
&= & \min_{v \in \Null{AB}^{\perp} } \frac{\norm{ABv}}{\norm{Bv}} \frac{\norm{Bv}}{\norm{v}} \nonumber \\
& \leq & \min_{v \in \Null{AB}^{\perp} } \frac{\norm{ABv}}{\norm{Bv}}  \min_{v \in \Null{AB}^{\perp} }  \frac{\norm{Bv}}{\norm{v}}.\label{eq:lemlamAB}
\end{eqnarray}
Given that
\[ \Null{B} \subset \Null{AB}\]
ergo
\[\Null{AB}^\perp \subset \Null{B}^\perp \overset{\eqref{eq:NullANullB}}{\subset} \Null{A}^\perp. \]
The above shows that 
\begin{equation}
\min_{v \in \Null{AB}^{\perp} }  \frac{\norm{Bv}}{\norm{v}} \geq \min_{v \in \Null{B}^{\perp} }  \frac{\norm{Bv}}{\norm{v}} = \lambda_{\min}^+ (B). \label{eq:lemlamB}
\end{equation}
But also, since $\Null{B}^{\perp} = \Range{B}=\Range{BB}$ which follows from $B$ being symmetric and Lemma~\ref{lem:09709s}, we have that
\begin{eqnarray}
\min_{v \in \Null{AB}^{\perp} } \frac{\norm{ABv}}{\norm{Bv}}  
 & \geq  &  \min_{v \in \Range{B}} \frac{\norm{ABv}}{\norm{Bv}} \nonumber \\
&=& \min_{w \in \Range{BB}} \frac{\norm{Aw}}{\norm{w}} \nonumber \\
& = &  \min_{w \in \Range{B}} \frac{\norm{Aw}}{\norm{w}} = \lambda_{\min}^+ (A).
\label{eq:lemlamA}
\end{eqnarray}
Inserting~\eqref{eq:lemlamB} and~\eqref{eq:lemlamA} in~\eqref{eq:lemlamAB} gives the desired result.
\end{proof}

\subsection{Convenient probability lemma}

\begin{theorem} \label{lem:convprob}  Let $G$ be a positive symmetric semidefinite matrix. Let $S$ be a random matrix with a finite discrete distribution with $r$  $S= S_i \in \R^{n \times q_i}$ with probability  $p_i>0$ for $i=1,\ldots, r$. Let  $\mathbb{S} \eqdef [S_1, \ldots, S_r] \in \R^{n\times n}$. If
\begin{equation}\label{eq:convprob}p_i~=~\dfrac{\Tr{S_i^\top  G^2 S_i}}{\Tr{\mathbb{S}^\top  G^2 \mathbb{S}}},\quad \mbox{for } \quad i=1,\ldots, { r}.
\end{equation}
then
\begin{equation}
\lambda_{\min}^+ \left( G \E{S (S^\top G^2 S)^{\dagger}S^\top} G \right) \quad \geq \quad \frac{\lambda_{\min}^+\left( \mathbb{S}^\top  G^2\mathbb{S} \right) }{\Tr{\mathbb{S}^\top G^2\mathbb{S}}}.
\end{equation}
\end{theorem}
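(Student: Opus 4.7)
The plan is to rewrite $G\,\mathbb{E}[S(S^\top G^2 S)^\dagger S^\top]\,G$ as a weighted sum of orthogonal projections, dominate it in the Loewner order by a single rank-$r$ matrix, and then extract the claimed bound on $\lambda_{\min}^+$ by comparing null spaces. The main obstacle is to handle the matching of null spaces carefully enough that the Loewner domination transfers to a $\lambda_{\min}^+$ inequality, since this implication is false in general.

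First I would substitute $M_i \eqdef G S_i$ so that
\[
P_i \;\eqdef\; G S_i (S_i^\top G^2 S_i)^\dagger S_i^\top G \;=\; M_i (M_i^\top M_i)^\dagger M_i^\top,
\]
and invoke Lemma~\ref{lem:09709s} (or a direct computation using the four Moore--Penrose conditions) to show that $P_i$ is the orthogonal projector onto $\Range{M_i} = \Range{GS_i}$. Writing out the discrete expectation,
\[
GHG \;\eqdef\; G\,\E{S(S^\top G^2 S)^\dagger S^\top}\,G \;=\; \sum_{i=1}^r p_i P_i \;=\; \frac{1}{\Tr{\mathbb{S}^\top G^2 \mathbb{S}}}\sum_{i=1}^r \Tr{M_i^\top M_i}\,P_i,
\]
using the explicit formula for $p_i$ from~\eqref{eq:convprob}.

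Next I would use the key Loewner inequality $M_i M_i^\top \preceq \Tr{M_i^\top M_i}\,P_i$, which holds because $M_i M_i^\top$ is supported on $\Range{M_i}$ and its largest eigenvalue $\sigma_{\max}(M_i)^2$ is bounded above by the trace $\Tr{M_i M_i^\top}=\Tr{M_i^\top M_i}$. Summing over $i$ and dividing by $\Tr{\mathbb{S}^\top G^2 \mathbb{S}}$,
\[
GHG \;\succeq\; \frac{1}{\Tr{\mathbb{S}^\top G^2 \mathbb{S}}}\sum_{i=1}^r M_i M_i^\top \;=\; \frac{G\,\mathbb{S}\mathbb{S}^\top G}{\Tr{\mathbb{S}^\top G^2 \mathbb{S}}}.
\]

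The delicate step is now to turn this Loewner inequality into the advertised inequality for $\lambda_{\min}^+$. For this I would check that both sides share the same null space. On one hand, $\Null{\sum_i p_i P_i} = \bigcap_i \Null{P_i} = \bigl(\sum_i \Range{GS_i}\bigr)^\perp = \Range{G\mathbb{S}}^\perp$. On the other hand, since $G\mathbb{S}\mathbb{S}^\top G = (G\mathbb{S})(G\mathbb{S})^\top$, its null space is $\Null{(G\mathbb{S})^\top} = \Range{G\mathbb{S}}^\perp$ as well. Hence for any nonzero $v$ in the common range,
\[
\langle GHG\, v, v\rangle \;\geq\; \frac{\langle G\mathbb{S}\mathbb{S}^\top G\, v, v\rangle}{\Tr{\mathbb{S}^\top G^2 \mathbb{S}}} \;\geq\; \frac{\lambda_{\min}^+(G\mathbb{S}\mathbb{S}^\top G)}{\Tr{\mathbb{S}^\top G^2 \mathbb{S}}}\,\|v\|^2,
\]
and minimizing over $v \in \Null{GHG}^\perp$ yields $\lambda_{\min}^+(GHG)\geq \lambda_{\min}^+(G\mathbb{S}\mathbb{S}^\top G)/\Tr{\mathbb{S}^\top G^2 \mathbb{S}}$. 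Finally, since $(G\mathbb{S})(G\mathbb{S})^\top$ and $(G\mathbb{S})^\top(G\mathbb{S})=\mathbb{S}^\top G^2 \mathbb{S}$ share the same nonzero spectrum, $\lambda_{\min}^+(G\mathbb{S}\mathbb{S}^\top G)=\lambda_{\min}^+(\mathbb{S}^\top G^2 \mathbb{S})$, which completes the proof.
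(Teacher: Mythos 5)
Your proposal is correct, and it takes a genuinely different route from the paper's proof. The paper factors the expectation as $\E{Z}=G\mathbb{S}D\mathbb{S}^\top G$ with the block-diagonal matrix $D=\mbox{diag}\left(p_i(S_i^\top G^2S_i)^\dagger\right)$, bounds $\lambda_{\min}^+(D)\geq 1/\Tr{\mathbb{S}^\top G^2\mathbb{S}}$ using the choice of $p_i$, and then combines cyclic invariance of the nonzero spectrum with the product bound $\lambda_{\min}^+(AB)\geq\lambda_{\min}^+(A)\lambda_{\min}^+(B)$ of Lemma~\ref{lem:prodsmallest}. You instead write $G\E{S(S^\top G^2S)^\dagger S^\top}G=\sum_i p_iP_i$ with $P_i$ the orthogonal projector onto $\Range{GS_i}$, use the elementary Loewner bound $M_iM_i^\top\preceq\Tr{M_i^\top M_i}P_i$ to dominate the sum from below by $G\mathbb{S}\mathbb{S}^\top G/\Tr{\mathbb{S}^\top G^2\mathbb{S}}$, and—this is the step you rightly flag as delicate—verify that both sides have the common null space $\Range{G\mathbb{S}}^\perp$ (the sum of PSD terms annihilates $v$ iff every $P_i$ does), which is exactly what makes the passage from a Loewner inequality to a $\lambda_{\min}^+$ inequality legitimate; without that null-space matching the implication would indeed be false. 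The final identification $\lambda_{\min}^+(G\mathbb{S}\mathbb{S}^\top G)=\lambda_{\min}^+(\mathbb{S}^\top G^2\mathbb{S})$ via the shared nonzero spectrum of $MM^\top$ and $M^\top M$ is fine. What each approach buys: yours is self-contained and bypasses Lemma~\ref{lem:prodsmallest} and the spectral analysis of the block-diagonal $D$ entirely, making the role of the probabilities (trace versus largest eigenvalue of each block $S_i^\top G^2S_i$) appear through the simple bound $\lambda_{\max}(M_iM_i^\top)\leq\Tr{M_i^\top M_i}$; the paper's argument is shorter once the appendix lemma is available and isolates the effect of the probability choice in the single estimate $\lambda_{\min}^+(D)\geq 1/\Tr{\mathbb{S}^\top G^2\mathbb{S}}$. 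Both yield exactly the stated rate.
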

\begin{proof}
Let $Z  =G S (S^\top G^2 S)^{\dagger}S^\top G.$ Note that 
\begin{equation}
 \E{Z} = G\mathbb{S} D \mathbb{S}^\top G, \label{eq:EZlemappend}
\end{equation}
with 
\begin{equation}\label{eq:Ddeflemma}
D \quad \eqdef \quad \mbox{diag}(p_i (S^\top G^2 S)^{\dagger}).
\end{equation}
Let $t_i = \Tr{S_i^\top  G^2 S_i}$, and with~\eqref{eq:convprob} in~\eqref{eq:Ddeflemma} we have
\[ D =\frac{1}{\Tr{\mathbb{S}^\top G^2\mathbb{S}}}\mbox{diag}\left(t_1 (S_1^\top  G^2 S_1)^{\dagger}, \ldots,
t_r (S_r^\top  G^2 S_r)^{\dagger}\right),
 \]
thus
\begin{equation}\label{eq:Dlambda} \lambda_{\min}^+(D) =\frac{1}{\Tr{\mathbb{S}^\top G^2\mathbb{S}}} \min_{i \,: \, t_i \neq 0}\left\{ \frac{t_i}{\lambda_{\max}(S_i^\top  G^2 S_i)} \right\} \geq \frac{1}{\Tr{\mathbb{S}^\top G^2\mathbb{S}}}. 
\end{equation}
Thus
\begin{eqnarray}
\lambda_{\min}^+\left(\E{Z} \right)
&\overset{ \eqref{eq:EZlemappend}}{=} & \lambda_{\min}^+\left(G\mathbb{S} D \mathbb{S}^\top G\right) \nonumber\\
&= & \lambda_{\min}^+\left( D \mathbb{S}^\top G G\mathbb{S}\right) \nonumber\\
& \overset{\text{Lemma}~\ref{lem:prodsmallest} }{\geq} & \frac{\lambda_{\min}^+\left( \mathbb{S}^\top G G\mathbb{S}\right) }{\Tr{\mathbb{S}^\top G^2\mathbb{S}}},
\end{eqnarray} 
where in the second line we used that, for any matrices $A,B$, the matrices $AB$ and $BA$ share the same nonzero eigenvalues.\hfill\qed
\end{proof}

\end{document}